\DeclareMathOperator{\tr}{tr}
\DeclareMathOperator{\QM}{QM}
\renewcommand{\span}{{\rm span}}
\renewcommand{\emptyset}{\varnothing}
\renewcommand{\setminus}{\smallsetminus}
\def\tg{g}
\def\Rdd{\R^{d \times d} }
\def\mbS{\mathbb S }
\def\SRdd{\mathbb S\R^{d \times d} }
\def\SRnn{\mathbb S\R^{n \times n} }
\def\T{T}
\def\bes{\begin{equation*}}
\def\ees{\end{equation*}}
\def\beq{\begin{equation} }
\def\eeq{\end{equation} }
\def\ep{\varepsilon}
\def\ncboL{{non-commutative basic open\ }}
\def\ncbboL{{non-commutative bounded basic open\ }}
\def\ncbo{{non-commutative basic open\ }}
\def\ncbbo{{non-commutative bounded basic open\ }}
\def\cC{\mathcal C}
\def\cD{\mathcal D}
\def\cL{\mathcal L}
\def\cS{\mathcal S}
\def\cI{\mathcal I}
\def\ccP{\R\ax}
\def\cT{\mathcal T}
\def\smatng{(\SRnn)^g}
\def\posn{\mathfrak{I}_p(n)}
\def\pos{\mathfrak{I}_p}
\def\cDpn{\cD_p(n)}
\def\tL{\tilde L}
\def\cDpn{\cD_p(n)}
\def\dd{\delta}
\def\dddd{\delta \times \delta}
\def\ddp{\dd^\prime}
\def\nus{\breve \nu}
\def\ocD{\overline{\cD}}
\def\Nd{ {\left\lceil\frac{d}{2}\right\rceil }}
\def\ben{\begin{enumerate} }
\def\een{\end{enumerate} }
\def\benum{\begin{enumerate} }
\def\eenum{\end{enumerate} }
\def\x{x}
\def\xs{x^*}
\def\y{y}
\newcommand{\ax}{\langle\x\rangle}
\newcommand{\axs}{\langle\x,\xs\rangle}
\newcommand{\cy}{[\y]}
\def\bmat{\left[\begin{array}{ccccccccccccccc} }
\def\emat{\end{array}\right]}
\def\bmat{\begin{bmatrix}}
\def\emat{\end{bmatrix}}
\def\beq{\begin{equation}}
\def\eeq{\end{equation}}
\def\barr{\begin{array}}
\def\earr{\end{array}}
\def\T{\ast}
\def\TT{\ast}
\def\Rnn{ \R^{n \times n} }
\def\NN{\mathbb N}
\def\N{\mathbb N}
\def\FF{\CCxxS}
\def\smatn{\bbS \R^{n \times n}  }
\def\gtupn{(\smatn)^g}
\def\bbS{{\mathbb S}}
\def\cA{ {\mathcal A} }
\def\cC{ {\mathcal C} }
\def\cD{ {\mathcal D} }
\def\cI{ {\mathcal I} }
\def\cL{ L }
\def\cS{{\mathcal S} }
\def\cT{{\mathcal T}}
\def\la{\lambda}
\def\tL{{\tilde L}}
\def\smatng{(\SRnn)^g }
\def\j1tog{ j= 1, \ldots, g }
\def\bbS{ {\mathbb S}}
\def\cI{{\mathcal I}}
\def\NC{non-commutative}
\def\tA{{\widetilde A}}
\def\tA{\tilde{A}}
\def\L0t{\ (L_0 \otimes I_n ) \ }
\def\PLINE1{\beta}
\newcommand{\C}{{\mathbb C}}
\newcommand{\RR}{{\mathbb R}}
\newcommand{\R}{{\mathbb R}}
\def\NCRAG{ Non-Commutative Semi-Algebraic Geometry }
\def\NC{non-commutative }
\def\pp{q}
\def\qq{p}
\def\RRx{\RR \langle x \rangle}
\def\RRxxS{\RR\langle x,x^\T \rangle}
\def\FF{\RRxxS}
\def\possss{Positivstellens\"atze}
\newtheorem{thm}{Theorem}[section]
\newtheorem{cor}[thm]{Corollary}
\newtheorem{prop}[thm]{Proposition}
\theoremstyle{definition}
\theoremstyle{remark}
\newtheorem{remark}[thm]{Remark}
\newtheorem{assumption}     [thm]{Assumption}
\numberwithin{equation}{section}
\newtheorem{exa}[thm]{Example}
\newenvironment{example}%
         {\begin{exa}}
             {{\hfill $\Box ~~$}\end{exa}}
\newcounter{Inc}
\begin{document}
\setcounter{page}{1}

\title[SDP in Matrix Unknowns]{
Semidefinite programming in matrix unknowns which are dimension free}

\author[Helton]{J. William Helton${}^1$}
\address{J. William Helton, Department of Mathematics\\
  University of California \\
  San Diego}
\email{helton@math.ucsd.edu}
\thanks{${}^1$Research supported by NSF grants
DMS-0700758, DMS-0757212, and the Ford Motor Co.}

\author[Klep]{Igor Klep${}^2$}
\address{Igor Klep, Univerza v Ljubljani, Fakulteta za matematiko in fiziko \\
and
Univerza v Mariboru, Fakulteta za naravoslovje in matematiko 
}
\email{igor.klep@fmf.uni-lj.si}
\thanks{${}^2$Research supported by the Slovenian Research Agency grants
P1-0222 and P1-0288.}

\author[McCullough]{Scott McCullough${}^3$}
\address{Scott McCullough, Department of Mathematics\\
  University of Florida 
   }
   \email{sam@math.ufl.edu}
\thanks{${}^3$Research supported by the NSF grant DMS-0758306.}

\subjclass[2010]{Primary 90C22, 14P10, 52A05;
Secondary 46N10, 46L07, 13J30}
\date{21 June 2010}
\keywords{linear matrix inequality (LMI),
semidefinite programming (SDP),
completely positive, convexity,
Positivstellensatz, Gleichstellensatz, free positivity}

\setcounter{tocdepth}{3}
\contentsmargin{2.55em} 
\dottedcontents{section}[3.8em]{}{2.3em}{.4pc} 
\dottedcontents{subsection}[6.1em]{}{3.2em}{.4pc}
\dottedcontents{subsubsection}[8.4em]{}{4.1em}{.4pc}

\makeatletter
\newcommand{\mycontentsbox}{%
{\centerline{NOT FOR PUBLICATION}
\linespread{1.2}
\small\tableofcontents}}
\def\enddoc@text{\ifx\@empty\@translators \else\@settranslators\fi
\ifx\@empty\addresses \else\@setaddresses\fi
\newpage\mycontentsbox}
\makeatother

\begin{abstract}
One of the main applications of semidefinite programming lies
in linear systems and control theory. Many problems in this subject,
certainly the textbook classics, have matrices as variables,
and the  formulas naturally contain non-commutative polynomials in
matrices. These polynomials depend only on the system layout
and do not change with the size of the matrices involved,
hence such problems are called ``dimension-free''.
Analyzing dimension-free problems has led to the development 
recently of a non-commutative (nc) real algebraic geometry (RAG)
which, when combined with convexity, produces 
 dimension-free Semidefinite Programming.
This article surveys what is known about convexity in the non-commutative
setting and nc SDP and includes a brief survey of nc RAG.
Typically, the qualitative properties of the non-commutative case
are much cleaner than those of their scalar 
counterparts - 
variables in $\RR^g$.
Indeed we describe how relaxation of scalar variables
by matrix variables in several natural situations
results in a beautiful structure.
\end{abstract}

\maketitle 

\section{Introduction}
 Given symmetric $\ell \times\ell$ symmetric matrices
 with real entries $A_j$, the expression
\begin{equation}
 \label{eq:lmi}
  L(x)= I_\ell + \sum_{j=1}^g A_j x_j \succ 0
\end{equation}
 is a {\bf linear matrix inequality} (LMI).  Here 
  $\succ 0$ means positive definite, 
  $x=(x_1,\dots,x_g)\in\mathbb R^g$ and of interest is the set
 of solutions $x$.
 Taking advantage of the Kronecker (tensor) product $A\otimes B$ 
 of matrices, it is natural to consider, for tuples of
symmetric $n\times n$
 matrices $X=(X_1,\dots,X_g)\in (\mbS\R^{n\times n})^g$, the inequality
\begin{equation}
 \label{eq:LMI}
   L(X)= I_\ell \otimes I_n + \sum_{j=1}^g A_j\otimes X_j \succ 0.
\end{equation}
  For reasons which will become apparent soon,
  we call expression \eqref{eq:LMI} a {\bf non-commutative LMI} (nc LMI). 
  Letting $\cD_L(n)$ denote the solutions $X$ of size $n\times n$, note
  that $\cD_L(1)$ is the solution set of equation \eqref{eq:lmi}.
  In many areas of mathematics and its applications, the
  inequality \eqref{eq:LMI} is called the {\bf quantized} 
  version of inequality \eqref{eq:lmi}.

  Quantizing a polynomial inequality requires the notion of 
  a non-commutative (free) polynomial which 
  can loosely be thought of as a polynomial in matrix unknowns.
  Section \ref{subsec:ncpolys} below gives the details on these
  polynomials. For now we limit the discussion to the example,
\begin{equation}
 \label{eq:ex-sym-p}
  p(x,y) = 4 - x-y -(2x^2 +xy+yx+2y^2).
\end{equation}
  Of course, for symmetric $n\times n$ matrices $X,Y$, 
\begin{equation}
 \label{eq:ex-p-eval}
  p(X,Y)=4I_n-X-Y-(2X^2+XY+YX+2Y^2).
\end{equation}
  The set $\{(x,y)\in\mathbb R^2:p(x,y)>0\}$ is a semi-algebraic set. 
  By analogy, the set $\{(X,Y): p(X,Y)\succ 0\}$ is
  a {\bf non-commutative semi-algebraic set}.
 
  nc LMIs, and more generally non-commutative semi-algebraic sets,
  arise naturally in semidefinite programming (SDP) and in linear
  systems theory problems determined by a signal-flow diagram.
  They are of course basic objects in the study of operator
  spaces and thus are related to problems like Connes' embedding
  conjecture \cite{Con,KS08} and the  Bessis-Moussa-Villani (BMV) conjecture \cite{bmv} 
  from quantum statistical mechanics \cite{KSbmv}.  As is seen
  in Theorem \ref{thm:commposIntro} below, they even have something
  to say about their scalar (commutative) counterparts. For some these
  non-commutative considerations have their own intrinsic interest as
  a free analog to classical semi-algebraic geometry. 

 Non-commutative will often be shortened to nc.

\subsection{The roadmap}
The paper treats four   areas of research concerning
  nc LMIs and  nc polynomials.

  In the remainder of the introduction we first,
  in Subsection \ref{subsec:ncpolys},  
  give additional background on 
  a core object of our study, polynomials in non-commuting variables.
  The initiated reader may wish to skip this subsection. 
 Subsections \ref{subsec:intro-to-domination}, \ref{subsec:nc-convex-sets},
  \ref{subsec:convex-polys}, and \ref{subsec:alg-certs} give 
 overviews of the four main topics of the survey. 

 The body of the paper consists of six sections. 
 The first four give further detail on the main topics. 
 Except for Section \ref{sec:conv=LMI} which has its own
 motivation subsection, motivation for our investigations is
 weaved into the discussion. Convexity is a recurring theme.
 Section \ref{sec:softw} 
 offers a list of computer algebra packages for work in a free $*$-algebra
 revolving around convexity and positivity.

\subsection{Non-commutative polynomials}
  \label{subsec:ncpolys}
   Let
   $\ccP$ \index{$\ccP$} denote the real algebra of polynomials in the
   non-commuting indeterminates $x=(x_1,\ldots,x_g).$
   Elements of $\ccP$ are
   {\bf non-commutative polynomials}, abbreviated to
  {\bf nc polynomials} or often just {\bf polynomials}.
   \index{non-commutative polynomial}
  \index{nc polynomial}
   Thus, a non-commutative polynomial $p$ is a finite sum,
 \begin{equation}
  \label{eq:poly}
    p=\sum p_w w,
 \end{equation}
   where each $w$ is a word in $(x_1,\ldots,x_g)$ and the coefficients
 $p_w\in\mathbb{R}$. The polynomial $p$ of equation \eqref{eq:ex-sym-p}
 is a non-commutative polynomial of degree two in two variables.
 The polynomial
\begin{equation}
 \label{sym-and-not}
  q= x_1 x_2^3 + x_2^3 x_1 + x_3x_1x_2 +x_2 x_1 x_3
\end{equation}
 is an non-commutative polynomial of degree four in three variables.

\subsubsection{Involution}
   There is a natural {\bf involution} ${}^*$ on
   $\ccP$  given by
 \begin{equation}
  \label{eq:genericp}
   p^*=\sum p_w w^*,
 \end{equation}
  where, for a word $w$,
 \begin{equation}
  \label{eq:wordT}
   w=x_{j_1}x_{j_2}\cdots x_{j_n} \mapsto w^* = x_{j_n}
   \cdots x_{j_2}x_{j_1}.
 \end{equation}
   A polynomial $p$ is {\bf symmetric} if $p^*=p$. \index{symmetric polynomial}
   For example, the polynomials of
   equation \eqref{eq:ex-sym-p} is symmetric, whereas the
   $q$ of equation \eqref{sym-and-not} is not. 
   In particular, $x_j^*=x_j$ and for this reason
   the variables are sometimes referred to as symmetric
   non-commuting variables. \index{symmetric variables}
 
  Denote, by $\ccP_d,$ the polynomials in $\ccP$ of (total) degree $d$
  or less.

\subsubsection{Substituting Matrices for Indeterminates}
  Let $\smatng$ denote the set of $g$-tuples \index{$\smatng$}
  $X=(X_1,\ldots,X_g)$ of real symmetric $n\times n$ matrices.
  A  polynomial $p(x)=p(x_1,\ldots,x_g)\in \ccP$
  can naturally be evaluated at a
  tuple $X\in \smatng$
  resulting in  an $n\times n$ matrix. 
  Equations \eqref{eq:ex-sym-p} and \eqref{eq:ex-p-eval}
  are illustrative.  
  In particular,
  the constant term  $p_\emptyset$ of $p(x)$  becomes $p_\emptyset I_n$;
  i.e., the empty word evaluates to $I_n$.
  Often we write $p(0)$ for $p_\emptyset$ interpreting the
  $0$ as $0\in\mathbb R^g$.
 As a further 
 example, for the polynomial $q$ from equation \eqref{sym-and-not},
\[
  q(X)= X_1 X_2^3 + X_2^3 X_1 + X_3X_1X_2 +X_2 X_1 X_3.
\]

  The involution on $\ccP$
  that was introduced earlier is compatible with
  evaluation at $X$ and matrix transposition, i.e.,
\[
 p^*(X)=p(X)^*,
\]
  where $p(X)^*$ denotes the transpose of the  matrix  $ p(X)$.
  Note, if $p$ is symmetric, then so is $p(X)$.

\subsubsection{Matrix-Valued Polynomials}
  Let $\ccP^{\dd\times \ddp}$ denote \index{$\ccP^{\dd\times \ddp}$}
  the $\dd\times \ddp$ matrices with entries
  from $\ccP$.  In particular, if $p\in \ccP^{\dd\times \ddp}$, then
 \begin{equation}
  \label{eq:pww}
   p=\sum p_w w,
\end{equation}
  where the sum is finite and each $p_w$ is a real $\dd\times \ddp$  matrix.
  Denote, by $\ccP_d^{\dd\times \ddp},$
 the subset of $\ccP^{\dd\times \ddp}$
 whose polynomial entries have degree $d$ or less.
 \index{$\ccP^{\dd\times \ddp}_m$}

  Evaluation   at $X \in \smatng$ naturally
  extends to $p\in \ccP^{\dd\times\ddp}$
via the Kronecker tensor product,
  with the result, $p(X),$ a $\dd\times\ddp$ block
  matrix with $n\times n$ entries.
 The involution ${}^*$ naturally extends to
  $\ccP^{\dddd}$ by
 \begin{equation}
  \label{eq:pwwstar}
   p=\sum p_w^* w^*,
\end{equation}
  for $p$ given by equation \eqref{eq:pww}.
  A polynomial
  $p\in\ccP^{\dd\times \dd}$ is symmetric if
  $p^*=p$ and in this case $p(X)=p(X)^*$.

  A simple method of constructing new matrix valued
  polynomials from old ones is by direct sum. \index{direct sum}
  For instance, if
  $p_j\in\ccP^{\dd_j\times \dd_j}$ for $j=1,2$, then
 \[
   p_1\oplus p_2=\begin{bmatrix} p_1 & 0\\0 & p_2 \end{bmatrix}
    \in \ccP^{(\dd_1+\dd_2)\times (\dd_1+\dd_2)}.
 \]
   \index{$p_1\oplus p_2$}

\subsubsection{Linear Matrix Inequalities $($LMIs$)$}
  Given symmetric $\ell \times \ell$ 
  matrices $A_0,A_1,\dots,A_g$, the expression
 \beq\label{eq:pencilDef}
    L(x) = A_0 + \sum_{j=1}^g A_j x_j
 \eeq
  is an affine linear nc matrix polynomial, better known as a {\bf linear} 
  (or affine linear) {\bf pencil}.  In the case that $A_0=0$, $L$
  is a {\bf truly linear pencil}; and when $A_0=I$, we say
  $L$ is a {\bf monic linear pencil}. 

  The inequality $L(x)\succ 0$ for $x\in\mathbb R^g$ is a
  {\bf linear matrix inequality (LMI)}. LMIs are ubiquitous in science
  and engineering. 
  Evaluation of $L$ at $X\in\smatng$ is most easily described 
  using tensor products as in equation \eqref{eq:LMI} and
  the expression $L(X)\succ 0$ is a {\bf non-commutative LMI},
  or nc LMI for short.   

\subsection{LMI Domination and Complete Positivity}
 \label{subsec:intro-to-domination}
  This section discusses the nc LMI versions of 
   two natural LMI domination questions. 
   To fix notation, let
\[
  L(x)=A_0+\sum_{j=1}^g A_j x_j,
\]
   be a given linear pencil 
  (thus $A_j$ are symmetric $\ell \times\ell$ matrices).
   For a fixed $n$ the solution set of all
$X\in\smatng$ satisfying $L(X)\succ 0$ is denoted $\cD_L(n)$ and the
  sequence (graded set) $(\cD_L(n))_{n\in\N}$ is written $\cD_L$. 
  Note that $\cD_L(1)$ is the solution set of the 
  classical (commutative) LMI, $L(x)\succ 0$.

  Given linear matrix inequalities  (LMIs) $L_1$ and $L_2$
  it is natural to ask:
\ben[\rm (Q$_1$)]
\item
 when does one dominate the other, that is,
 when is $\cD_{L_1}(1)\subseteq \cD_{L_2}(1)$? 
\item
 when are they mutually dominant, that is, $\cD_{L_1}(1)=\cD_{L_2}(2)$?
\een 
 While such problems can be NP-hard, their nc relaxations
 have elegant answers.
 Indeed, they reduce to constructible semidefinite programs.
 We chose to begin with this topic because it offers the most gentle
 introduction to our matrix subject.

 To describe a sample result,
 assume there is an $x\in\mathbb R^g$ such that 
 both $L_1(x)$ and $L_2(x)$
 are both positive definite,
 and suppose $\cD_{L_1}(1)$ is bounded. 
 If $\cD_{L_1}(n)\subseteq \cD_{L_2}(n)$ for every $n$, then
 there exist matrices $V_j$ such that
\beq\tag{\rm A$_1$}\label{eq:abstr}
L_2(x)=V_1^* L_1(x) V_1 + \cdots + V_\mu^* L_1(x) V_\mu.
\eeq
  The converse is of course immediate. 
 As for (Q$_2$) we show that $L_1$ and $L_2$
  are mutually dominant ($\cD_{L_1}(n)=\cD_{L_2}(n)$ for all $n$)
  if and only if, up to
 certain  redundancies described in detail in Section \ref{sec:lmiDom&CP},
 $L_1$ and $L_2$
 are unitarily equivalent.

 It turns out that our
 matrix variable LMI domination problem is equivalent
 to the  classical problem of determining
 if a linear map $\tau$ from one subspace of matrices to another is
 ``completely positive''. Complete positivity is one of the main techniques
 of modern operator theory and
 the theory of operator algebras. On one hand it provides tools for
 studying LMIs and on the other hand, since completely positive
 maps are not so far from representations and generally are more
 tractable than their merely positive counterparts, the theory of
 completely positive maps provides perspective on the
 difficulties in solving LMI domination problems.
 nc LMI domination is the topic of Section \ref{sec:lmiDom&CP}.

\subsection{Non-commutative Convex Sets and LMI Representations}
 \label{subsec:nc-convex-sets}
 Section  \ref{subsec:intro-to-domination} dealt with
 the (matricial) solution set of a
  Linear Matrix Inequality
\[
  \cD_L = \{X \colon  L(X) \succ 0 \}.
\]
 The set $\cD_L$ is convex in the
  sense that each $\cD_L(n)$ is convex.
  It is also a non-commutative basic open semi-algebraic set
 (in a sense we soon define).
 The main theorem of this section is the converse,
 a result
 which has implications for both semidefinite programming
 and systems engineering.

  Let $p\in \ccP^{\dd\times \dd}$ be a given symmetric non-commutative
  $\dd\times\dd$-valued matrix polynomial. 
  Assuming that $p(0)\succ 0$, the
 positivity   set $\cD_{p}(n)$ of
  a non-commutative symmetric polynomial $p$ in dimension $n$
  is the component of $0$ of the set
\[
   \{ X \in(\SRnn)^g  \colon
  \ p(X)\succ0 \}.
\]
  The
  {\bf positivity
  set}, $\cD_p$, is the
  sequence of sets $(\cD_p(n) )$,
  which is the type of set we call  a
  {\bf\ncbo} semi-algebraic set.
 The non-commutative set $\cD_p$ is  called {\bf convex} if, for each $n,$
  $\cD_p(n)$ is convex.
  A set is said to have a {\bf Linear Matrix Inequality Representation}
  if it is the set of all solutions to some LMI, that is,
  it has the form $\cD_L$ for some $L(x)=I+\sum_j A_jx_j$.

 The main theorem of Section \ref{sec:conv=LMI} says:
 if $p(0)\succ 0$ 
 and  $\cD_p$ is  bounded,
 then $\cD_{p}$ has an LMI representation
 if and only if $\cD_p$ is convex.

\subsection{Non-commutative Convex Polynomials have Degree Two}
 \label{subsec:convex-polys}
 We turn now from non-commutative convex sets to
 non-commutative convex polynomials.
 The previous section exposed the rigid the structure of
 sets which are both convex and the sublevel set of a non-commutative
 polynomial.  Of course if $p$ is concave ($-p$ is convex), then
 its sublevel sets are convex.  But more is true. 

  A symmetric polynomial $p$ is {\bf matrix convex},
  if for each positive integer $n$, each
  pair of tuples of symmetric matrices $X\in\smatng$ and 
  $Y\in\smatng$, and each $0\le t \le 1$,
\begin{equation*}
  p\big(tX+(1-t)Y\big)\preceq tp(X)+(1-t)p(Y).
\end{equation*}
  The main result on convex polynomials, given in Section \ref{sec:convexPolys}, is
   that every  symmetric non-commutative polynomial
   which is matrix convex 
    has degree two or less.

\subsection{Algebraic certificates of non-commutative positivity: \possss}
 \label{subsec:alg-certs}
 An algebraic certificate for positivity of a polynomial $p$
 on a semi-algebraic set $S$ is a Positivstellensatz. The familiar
 fact that a polynomial $p$ in one-variable which is positive on
 $S=\mathbb R$ is a sum of squares is an example. 

 The theory of Positivstellens\"atze -  a pillar of the
 field of semi-algebraic geometry -  underlies
 the main approach currently used for global optimization
 of polynomials. 
 See~\cite{P00,L00} for a beautiful treatment of this, and other,
 applications of
 commutative semi-algebraic geometry.
 Further, 
 because convexity of a polynomial
 $p$ on a set $S$ is equivalent to  positivity of the Hessian of
 $p$ on $S$, this theory also provides a link between convexity 
 and semi-algebraic geometry.  Indeed, this link in the non-commutative
 setting ultimately leads to the conclusion the a matrix convex non-commutative polynomial has degree at most two.

Polynomial optimization problems involving non-commuting variables also arise naturally in many areas of quantum physics, see \cite{Pi:09,Pi:10}.

 Positivstellens\"atze in various incarnations appear
 throughout this survey as they arise naturally in
 connection with the previous topics. 
 Section \ref{sec:posss&nullss} contains a brief list of 
 algebraic certificates for positivity like conditions
 for non-commutative polynomials in both symmetric and non-symmetric
 nc variables, Thus, this section
 provides an overview of non-commutative semi-algebraic geometry with 
 the theme being that nc Positivstellens\"atze are cleaner and more
 rigid than there commutative counterparts.

\section{LMI Domination and Complete Positivity}
\label{sec:lmiDom&CP}
 In this section we expand upon the discussion of nc LMI 
  domination of
 Subsection \ref{subsec:intro-to-domination}. Recall, a monic linear
 pencil is an expression of the form
\[
  L(x)=I+\sum_{j=1}^g A_j x_j,
\]
  where, for some $\ell$, the $A_j$ are symmetric $\ell \times \ell$ matrices
 with real entries and $I$ is the $\ell\times \ell$ identity. For a given 
 positive integer $n$, 
\[
  \cD_L(n)=\{X\in\smatng : L(X)\succ 0\}
\]
  and let $\cD_L$ denote the sequence of sets $(\cD_L(n))_{n\in\N}$. 
Thus $\cD_L$ is the solution set of the
 nc LMI $L(X)\succ 0$ and $\cD_L(1)$ is the solution set of the
 traditional LMI $L(x)\succ 0$ ($x\in\mathbb R^g$). 
We call $\cD_L$ an {\bf nc LMI}.

\subsection{Certificates for  LMI Domination}
 This subsection contains precise algebraic
 characterizations of nc LMI domination. Algorithms, the
 connection to complete positivity, examples, and the application to
 a new commutative Positivstellensatz 
 follow in succeeding subsections. 

\begin{thm}[Linear Positivstellensatz \cite{HKMlmi}]\label{thm:intro1}
 Let $L_j\in \mbS\R^{d_j\times d_j}\ax$, $j=1,2$,
 be monic linear pencils and assume
 $\cD_{\cL_1}(1)$ is bounded.
 Then $\cD_{L_1}\subseteq\cD_{L_2}$ if
 and only if there is a $\mu\in\NN$ and an 
 isometry $V\in\R^{\mu d_1\times d_2}$ such that
\beq\label{eq:linPosIntro}
  L_2(x)= V^* \big( I_\mu \otimes L_1(x) \big) V=
    \sum_{j=1}^\mu V_j^* L_1(x) V_j.
\eeq
\end{thm}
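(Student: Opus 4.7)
The ``if'' direction is immediate: if $L_2(x)=V^*(I_\mu\otimes L_1(x))V$ with $V^*V=I$ and $X\in\cD_{L_1}(n)$, then $V\otimes I_n$ is an isometry compressing $I_\mu\otimes L_1(X)\succ 0$ to $L_2(X)$, which is therefore positive definite. For the converse, my plan is to translate the matricial domination hypothesis into complete positivity of a linear map between operator systems and then invoke Arveson's extension theorem together with Stinespring's dilation.

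Write $L_i(x)=I_{d_i}+\sum_j A_j^{(i)}x_j$ and put $S_1:=\span\{I,A_1^{(1)},\ldots,A_g^{(1)}\}\subseteq\mbS\R^{d_1\times d_1}$. Boundedness of $\cD_{L_1}(1)$ forces $\{I,A_1^{(1)},\ldots,A_g^{(1)}\}$ to be linearly independent: a nontrivial relation with vanishing constant term produces a line along which $L_1$ stays positive, while a relation with nonzero constant term forces $I\in\span\{A_j^{(1)}\}$ and yields an unbounded ray in $\cD_{L_1}(1)$. Hence the linear map $\phi\colon S_1\to\mbS\R^{d_2\times d_2}$ defined by $\phi(I)=I$ and $\phi(A_j^{(1)})=A_j^{(2)}$ is well-defined.

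The crux is complete positivity of $\phi$. Fix $n$ and symmetric $M_0,M_1,\ldots,M_g\in\R^{n\times n}$ with $M_0\otimes I_{d_1}+\sum_j M_j\otimes A_j^{(1)}\succeq 0$, and after replacing $M_0$ by $M_0+\epsilon I$ assume strict positivity. A boundedness argument then forces $M_0\succ 0$: if $\langle M_0 v_0,v_0\rangle\le 0$ for some $v_0$, then evaluating the pencil on vectors of the form $u\otimes v_0$ would exhibit a full ray inside $\cD_{L_1}(1)$, contradicting boundedness. Writing $M_0=C^2$ with $C$ invertible and conjugating by $I\otimes C^{-1}$ converts the positivity condition into $L_1(X)\succ 0$ with $X_j:=C^{-1}M_j C^{-1}$. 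The hypothesis $\cD_{L_1}\subseteq\cD_{L_2}$ then gives $L_2(X)\succ 0$, and reversing the conjugation yields $M_0\otimes I_{d_2}+\sum_j M_j\otimes A_j^{(2)}\succ 0$. Letting $\epsilon\downarrow 0$ finishes the complete positivity step.

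To conclude, I extend $\phi$ by Arveson to a unital completely positive map $\tilde\phi\colon\mbS\R^{d_1\times d_1}\to\mbS\R^{d_2\times d_2}$ and apply Stinespring to write $\tilde\phi(a)=V^*\pi(a)V$ for a $*$-representation $\pi$ of the full matrix algebra $\R^{d_1\times d_1}$ and an isometry $V$. Since every representation of a full matrix algebra is, up to unitary equivalence, an amplification $a\mapsto I_\mu\otimes a$, applying $\tilde\phi$ to $L_1(x)$ yields exactly $L_2(x)=V^*(I_\mu\otimes L_1(x))V=\sum_{j=1}^\mu V_j^*L_1(x)V_j$. The main obstacle I anticipate is the subclaim that $M_0\succ 0$ in the complete positivity step---this is the one spot where boundedness of $\cD_{L_1}(1)$ is essential, and it takes care to manipulate rank-one tensors correctly across the two Kronecker factors; everything else reduces to the standard operator-system machinery.
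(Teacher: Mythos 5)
Your proposal is correct and follows essentially the same route as the paper: boundedness of $\cD_{L_1}(1)$ yields linear independence of $\{I,A^{(1)}_1,\dots,A^{(1)}_g\}$ and strict positivity of the coefficient $M_0$, the domination hypothesis is thereby converted into complete positivity of the unital map $\tau\colon\cS_1\to\cS_2$ (the paper's Theorem \ref{thm:tauisCP}), and the representation \eqref{eq:linPosIntro} is extracted by extending to the full matrix algebra and using the Choi--Stinespring structure of completely positive maps, which is exactly the device illustrated in Section \ref{sec:nonScalar} and carried out in \cite{HKMlmi}. The only point to watch is that the Arveson extension and Stinespring steps must be run over the real scalars, a technicality addressed in \cite{HKMlmi}.
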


 Suppose $L\in\mbS\R^{d\times d}\ax$,
\[
  L= I +\sum_{j=1}^g A_j x_j
\]
 is a monic linear pencil.  A subspace $\mathcal H\subseteq \mathbb R^d$
 is {\bf reducing for $L$} if $\mathcal H$ reduces each
 $A_j$; i.e., if $A_j\mathcal H\subseteq \mathcal H$.  Since
 each $A_j$ is symmetric, it also follows that
 $A_j\mathcal H^\perp \subseteq \mathcal H^\perp$.  Hence, with respect
  to the decomposition $\mathbb R^d =\mathcal H\oplus \mathcal H^\perp$,
  $L$ can be written as the direct sum,
 \[
   L = \tL \oplus \tL^\perp
   =\begin{bmatrix} \tL & 0 \\ 0 & \tL^\perp \end{bmatrix}
\qquad \text{where} \qquad
  \tL= I+\sum_{j=1}^g \tA_j x_j,
\]
  and  $\tA_j$ is the restriction of $A_j$ to $\mathcal H$.
  (The pencil $\tL^\perp$ is defined similarly.)
  If $\mathcal H$ has dimension $\ell$, then by identifying
  $\mathcal H$ with $\mathbb R^\ell$, the pencil
  $\tL$    is a monic linear pencil of size $\ell$.
  We say that $\tL$ is a {\bf subpencil} of $L$.
  If moreover, $\cD_L=\cD_{\tL}$, then $\tL$
  is a {\bf defining subpencil} and if
  no proper subpencil of $\tL$ is a defining
  subpencil for $\cD_L$, then $\tL$ is
  a {\bf minimal defining $($sub$)$pencil}.

\begin{thm}[Linear Gleichstellensatz \cite{HKMlmi}]
 \label{thm:minimalIntro}
Suppose
$L_1, L_2$ 
are monic linear pencils with $\cD_{\cL_1}(1)$ bounded.
Then $\cD_{\cL_1}=\cD_{\cL_2}$
if and only if
  minimal defining pencils $\tL_1$ and $\tL_2$
for $\cD_{\cL_1}$ and $\cD_{\cL_2}$ respectively,
are unitarily equivalent. That is,
  there is a unitary  matrix
 $U$ such that
\beq\label{eq:Intro2}
\tL_2(x)=U^* \tL_1(x) U.
\eeq
\end{thm}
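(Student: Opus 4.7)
If $\tilde L_2(x) = U^* \tilde L_1(x) U$ for some unitary $U$, then for every tuple $X$ of symmetric matrices one has $\tilde L_2(X) = (U \otimes I)^* \tilde L_1(X)(U \otimes I)$, so positivity is preserved at every size and $\cD_{\tilde L_1} = \cD_{\tilde L_2}$. Because each $\tilde L_j$ is a defining subpencil, $\cD_{L_j} = \cD_{\tilde L_j}$, hence $\cD_{L_1} = \cD_{L_2}$. The content is in the forward direction.

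\textbf{Forward direction, setup.} Assume $\cD_{L_1} = \cD_{L_2}$ and, after replacing each $L_j$ by its minimal defining subpencil $\tilde L_j$, suppose that each $L_j$ is already minimal. Since $\cD_{L_1}(1)$ is bounded, the Linear Positivstellensatz (Theorem~\ref{thm:intro1}) applied to both inclusions $\cD_{L_1} \subseteq \cD_{L_2}$ and $\cD_{L_2}\subseteq \cD_{L_1}$ produces isometries $V \in \R^{\mu d_1 \times d_2}$ and $W \in \R^{\nu d_2 \times d_1}$ with
\[
L_2(x) = V^*\bigl(I_\mu \otimes L_1(x)\bigr) V, \qquad L_1(x) = W^*\bigl(I_\nu \otimes L_2(x)\bigr) W.
\]
Substituting one into the other yields a self-dilation
\[
L_1(x) = T^*\bigl(I_N \otimes L_1(x)\bigr) T, \qquad T := (I_\nu \otimes V) W, \qquad N := \mu\nu,
\]
where $T$ is isometric. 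Writing $T = \sum_k e_k \otimes T_k$ with $T_k \in \R^{d_1 \times d_1}$ and matching coefficients, the unital completely positive map $\Phi(X) := \sum_k T_k^* X T_k$ satisfies $\Phi(I) = I$ and $\Phi(A_j^{(1)}) = A_j^{(1)}$ for every coefficient of $L_1$.

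\textbf{Key step and main obstacle.} The goal is to upgrade this data to a unitary equivalence. By Kadison--Schwarz the fixed-point behavior of $\Phi$ on $\{I, A_1^{(1)}, \ldots, A_g^{(1)}\}$ propagates to the entire $*$-algebra they generate, which must therefore lie in the multiplicative domain of $\Phi$. One then exploits minimality of $L_1$: minimality forbids any proper invariant subspace of $\{A_1^{(1)},\ldots,A_g^{(1)}\}$ on which the restricted pencil still cuts out $\cD_{L_1}$, and combined with the multiplicative-domain rigidity this forces the $T_k$ to be scalar multiples of a single unitary $U_0$ commuting with every $A_j^{(1)}$; in other words $T$ collapses to $e\otimes U_0$ for a unit vector $e\in\R^N$. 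Unwinding $T=(I_\nu\otimes V)W$ back through the factors forces $V = e_V \otimes U$ and $W = e_W\otimes U'$ for isometries $U:\R^{d_2}\to\R^{d_1}$ and $U':\R^{d_1}\to\R^{d_2}$; in particular $d_1 = d_2$, $U$ is unitary, and $L_2 = U^* L_1 U$. The hard part is precisely the collapse step: extracting, from the self-dilation $L_1 = T^*(I_N\otimes L_1)T$ alone, the claim that minimality leaves no residual freedom in $T$ beyond a single unitary slot. The natural framework is Arveson's uniqueness theorem for the minimal Stinespring dilation attached to the operator system $\span\{I, A_1^{(1)},\ldots, A_g^{(1)}\}$, with boundedness of $\cD_{L_1}(1)$ guaranteeing enough matrix states to make the minimal pencil canonical; reformulating the pencil-theoretic notion of minimality in terms that exactly align with minimality of this dilation is where the technical work lies.
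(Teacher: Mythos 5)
The survey itself states this theorem without proof, deferring to \cite{HKMlmi}, so the comparison is with the argument there. Your reverse direction is fine, and your opening moves in the forward direction --- applying Theorem \ref{thm:intro1} to both inclusions (legitimate, since $\cD_{L_2}(1)=\cD_{L_1}(1)$ is bounded), composing the two certificates into the self-dilation $L_1=T^*(I_N\otimes L_1)T$, and reading this as a unital completely positive map $\Phi$ fixing $I$ and each coefficient $A_j^{(1)}$ --- are the right setup and consistent with the strategy of the cited proof. The problem is that the step carrying all the weight is asserted rather than proved, and the one concrete mechanism you offer for it is incorrect. Kadison--Schwarz gives only the inequality $\Phi\bigl((A_j^{(1)})^2\bigr)\succeq (A_j^{(1)})^2$; a unital completely positive map can fix a family of symmetric matrices without the $*$-algebra they generate lying in its multiplicative domain. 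Forcing equality in the Schwarz inequality needs extra input --- a faithful $\Phi$-invariant state, or trace preservation $\sum_k T_kT_k^*=I$ --- and here you only know unitality, $\sum_k T_k^*T_k=I$. So ``the fixed-point behavior propagates to the entire $*$-algebra'' is a genuine gap, not a routine remark.

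Even granting the multiplicative-domain statement, what it yields is $(I_N\otimes a)T=Ta$ for $a$ in the generated algebra $\cA$, i.e.\ each $T_k$ lies in the commutant of $\cA$. That forces the $T_k$ to be scalars only when $\cA=\R^{d_1\times d_1}$, and minimality does not reduce you to that irreducible case: the $3\times 3$ pencil of Section \ref{sec:nonScalar} is minimal, yet its coefficients generate a proper subalgebra with nontrivial commutant. The entire content of the theorem is the passage from ``$T_k$ in the commutant'' (or directly from the two isometries $V$, $W$) to a single unitary, which requires converting the pencil-theoretic definition of minimality --- no proper reducing subspace carries a defining subpencil --- into multiplicity-freeness of the representation of $\cA$ together with the indispensability of each irreducible summand, and then tracking this through both certificates to get $d_1=d_2$. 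You explicitly flag this collapse as ``where the technical work lies'' and leave it undone. As written, the proposal is an accurate plan of attack whose decisive lemma is missing and whose proposed justification (Kadison--Schwarz alone) does not hold.
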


\subsection{Algorithms for LMIs}

Of widespread interest is determining if
\beq\label{eq:Q1}
\cD_{L_1}(1)
\subseteq
\cD_{L_2}(1),
\eeq
or if $\cD_{L_1}(1)=\cD_{L_2}(1)$.
For example, the paper of Ben-Tal and Nemirovski
\cite{B-TN} exhibits simple cases where determining this
is NP-hard.
While we do not give details here we guide the reader to
\cite[Section 4]{HKMlmi}
where
we prove that $\cD_{L_1}\subseteq\cD_{L_2}$ is equivalent
to the feasibility of a certain semidefinite program
which we construct explicitly in \cite[Section 4.1]{HKMlmi}.
Of course,
if $\cD_{L_1}\subseteq\cD_{L_2}$, then
$\cD_{L_1}(1)\subseteq\cD_{L_2}(1)$. Thus our algorithm is a
type of
relaxation of the problem \eqref{eq:Q1}.

Also in \cite{HKMlmi} is
an algorithm (Section 4.2)
easily adapted from the first to determine if
$\cD_L$ is bounded, and
what its ``radius'' is.
By \cite[Proposition 2.4]{HKMlmi}, $\cD_L$ is bounded
if and only if $\cD_L(1)$ is bounded.
Our algorithm thus yields an upper bound of the radius
of $\cD_L(1)$.
In 
 \cite[Section 4.3]{HKMlmi}  we solve a
matricial relaxation of the classical matrix
cube problem, finding the biggest matrix cube contained
in $\cD_L$. Finally,
given a matricial LMI set $\cD_L$,
\cite[Section 4.4]{HKMlmi}
gives an algorithm to compute the
linear pencil $\tL\in\mbS\Rdd\ax$ with smallest possible $d$ satisfying
$\cD_L=\cD_\tL$.

\subsection{Complete Positivity and LMI Inclusion}
\label{sec:cp&lmi}
 To monic linear pencils $L_1$ and $L_2,$ 
\beq
\label{eq:penc12}
 L_j(x) = I+\sum_{\ell=1}^\tg A_{j,\ell} x_\ell \in\mbS\R^{d_j\times d_j}\ax , \quad j=1,2
\eeq
 are the naturally associated subspaces of $d_j\times d_j$ ($j=1,2$) matrices
\beq\label{eq:spanSdefined}
\cS_j = \span \{I, A_{j,\ell}\colon \ell=1,\ldots,\tg\}= \span\{\cL_j(X)\colon  X\in\R^\tg\}
\subseteq
\mbS\R^{d_j\times d_j}.
\eeq

 We shall soon see that the condition
 $L_2$ dominates $L_1$, equivalently $\cD_{L_1}\subset \cD_{L_2},$
 is equivalent to a 
 property called complete positivity, defined below,
 of the unital linear mapping 
 $\tau:\cS_1\to \cS_2$ determined by
\begin{equation}
 \label{eq:define-tau}
  \tau(A_{1,\ell})=A_{2,\ell}.
\end{equation}
 
 A recurring theme in the
 non-commutative setting, such as that of
 a subspace of  C$^*$-algebra
 \cite{Ar69,Ar72,Ar08} or in
 free probability \cite{Vo04,Vo05}
 to give two of many examples, is the need to consider
 the {\bf complete matrix structure} afforded by
 tensoring  with $n\times n$ matrices (over positive integers $n$).
 The resulting theory of operator
 algebras, systems, spaces and matrix
 convex sets  has matured to the point that there are now
 several excellent books on the subject including
 \cite{BL04,Pau,Pi}.

 Let $\cT_j\subseteq\R^{d_j\times d_j}$ be unital linear subspaces
 closed under the transpose, and $\phi:\cT_1\to\cT_2$ a unital linear $*$-map.
 For $n\in\N$, $\phi$ induces the map
$$\phi_n=I_n\otimes\phi:\R^{n\times n}\otimes\cT_1=\cT_1^{n\times n}
 \to \cT_2^{n\times n},
 \quad M\otimes A \mapsto M \otimes \phi(A),
$$
 called an {\bf ampliation} of $\phi$.
 Equivalently,
$$
 \phi_n\left( \begin{bmatrix} T_{11} & \cdots & T_{1n} \\
 \vdots & \ddots & \vdots \\
 T_{n1} & \cdots & T_{nn}\end{bmatrix}\right)
 =\begin{bmatrix} \phi(T_{11}) & \cdots & \phi(T_{1n}) \\
 \vdots & \ddots & \vdots \\
 \phi(T_{n1}) & \cdots & \phi(T_{nn})\end{bmatrix}
$$
 for $T_{ij}\in\cT_1$.
 We say that $\phi$ is {\bf $k$-positive} if
 $\phi_k$ is a positive map. If $\phi$ is $k$-positive for every $k\in\N$,
 then $\phi$ is {\bf completely positive}.

\subsection{The Map $\tau$ is Completely Positive}
 \label{sec:tauisCP}
 A basic observation is that
 $n$-positivity of $\tau$ is equivalent to the inclusion
 $\cD_{L_1}(n)\subseteq\cD_{L_2}(n)$. Hence $\cD_{L_1}\subseteq\cD_{L_2}$
 is equivalent
 to complete positivity of $\tau,$ an observation which
 ultimately leads 
 to the algebraic characterization
 of Theorem \ref{thm:intro1}.

\begin{thm}
 \label{thm:tauisCP}
 Consider the monic linear pencils of equation \eqref{eq:penc12}
 and assume that
$\cD_{\cL_1}(1)$ is bounded.
 Let $\tau:\cS_1\to\cS_2$ be the unital linear map of equation
  \eqref{eq:define-tau}.
\ben[\rm (1)]
\item $\tau$ is $n$-positive if and only if
   $\cD_{L_1}(n)\subseteq \cD_{L_2}(n)$;
\item
  $\tau$ is completely positive if and only if
  $\cD_{L_1}\subseteq\cD_{L_2}$.
\een
\end{thm}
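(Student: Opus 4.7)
Complete positivity is the conjunction of $n$-positivity for all $n$, so statement (2) follows from (1) by quantifying over $n$. For the easy direction of (1), assume $\tau$ is $n$-positive and take $X\in\cD_{L_1}(n)$; pick $\delta>0$ with $L_1(X)\succeq\delta(I_{d_1}\otimes I_n)$, whence
\[
L_1(X)-\delta(I_{d_1}\otimes I_n)=(1-\delta)\,I_{d_1}\otimes I_n+\sum_\ell A_{1,\ell}\otimes X_\ell\in M_n(\cS_1)
\]
is PSD. Applying the unital $n$-positive map $\tau_n$ gives $L_2(X)-\delta(I_{d_2}\otimes I_n)\succeq 0$, so $L_2(X)\succ 0$ and $X\in\cD_{L_2}(n)$.

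For the converse, let $T\in M_n(\cS_1)$ be PSD and, fixing a basis of $\cS_1$ containing $I_{d_1}$, write uniquely
\[
T=I_{d_1}\otimes B_0+\sum_\ell A_{1,\ell}\otimes B_\ell
\]
with symmetric $B_0,B_\ell\in\mbS\R^{n\times n}$, so that $\tau_n(T)=I_{d_2}\otimes B_0+\sum_\ell A_{2,\ell}\otimes B_\ell$. The argument hinges on showing $B_0\succeq 0$. For any $v\in\R^n$, sandwiching $T$ with $I_{d_1}\otimes v$ yields the PSD $d_1\times d_1$ matrix
\[
(I_{d_1}\otimes v^*)\,T\,(I_{d_1}\otimes v)=(v^*B_0 v)\,I_{d_1}+\sum_\ell (v^*B_\ell v)\,A_{1,\ell}.
\]
Setting $\beta=v^*B_0 v$ and $y_\ell=v^*B_\ell v$, this says $\beta I+\sum_\ell y_\ell A_{1,\ell}\succeq 0$. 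If $\beta<0$ then $\sum_\ell y_\ell A_{1,\ell}\succeq |\beta|I\succ 0$, and hence $L_1(ty)=I+t\sum_\ell y_\ell A_{1,\ell}\succ 0$ for every $t\geq 0$, so the ray $\{ty:t\geq 0\}$ lies in $\cD_{L_1}(1)$, contradicting boundedness. Thus $v^*B_0 v\geq 0$ for every $v$, i.e.\ $B_0\succeq 0$.

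With $B_0\succeq 0$ in hand, perturb by replacing $B_0$ with $B_0+\epsilon I\succ 0$ (so $T$ becomes $T_\epsilon=T+\epsilon\,I_{d_1}\otimes I_n$) and set $X^\epsilon_\ell=(B_0+\epsilon I)^{-1/2}B_\ell(B_0+\epsilon I)^{-1/2}$. Then
\[
T_\epsilon=\bigl(I_{d_1}\otimes(B_0+\epsilon I)^{1/2}\bigr)\,L_1(X^\epsilon)\,\bigl(I_{d_1}\otimes(B_0+\epsilon I)^{1/2}\bigr)\succeq 0
\]
forces $L_1(X^\epsilon)\succeq 0$. The shrinkage $L_1(tX^\epsilon)=(1-t)I+t L_1(X^\epsilon)\succ 0$ for $t\in[0,1)$ places $tX^\epsilon$ in $\cD_{L_1}(n)\subseteq\cD_{L_2}(n)$, so $L_2(tX^\epsilon)\succ 0$; sending $t\to 1^-$ gives $L_2(X^\epsilon)\succeq 0$, and conjugating back yields $\tau_n(T_\epsilon)\succeq 0$. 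Since $\tau_n(T_\epsilon)=\tau_n(T)+\epsilon\,I_{d_2}\otimes I_n$, the limit $\epsilon\to 0^+$ delivers $\tau_n(T)\succeq 0$.

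The main obstacle is the positivity of $B_0$: without the boundedness hypothesis on $\cD_{L_1}(1)$ one could have $\sum_\ell y_\ell A_{1,\ell}\succeq 0$ for nonzero $y$, permitting a PSD $T$ whose identity coefficient is indefinite, and the Schur-type substitution that reduces the problem to an $\cD_{L_1}(n)$-membership question would break down.
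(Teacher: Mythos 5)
Your proof is correct. The survey states this result as a ``basic observation'' and defers all details to \cite{HKMlmi}; your argument --- the $\delta$-shift to preserve strict positivity in the forward direction, and, for the converse, extracting the identity coefficient $B_0$ of a positive semidefinite $T\in M_n(\cS_1)$, proving $B_0\succeq 0$ from boundedness of $\cD_{L_1}(1)$, and realizing $T_\epsilon$ as a conjugate of $L_1(X^\epsilon)$ by $I\otimes(B_0+\epsilon I)^{1/2}$ --- is precisely the standard proof given there.
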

 
 Conversely, suppose $\cD$ is a unital $*$-subspace
 of $\SRdd$ and
 $\tau:\cD\to \mathbb S\mathbb R^{d^\prime\times d^\prime}$
 is completely positive.
 Given a basis $\{I,A_1,\ldots,A_g\}$ for $\cD$, let $B_j=\tau(A_j)$. Let
\[
  L_1(x) = I+\sum A_j x_j, \ \ \ L_2(x)=I+\sum B_j x_j.
\]
 The complete positivity of $\tau$ implies, if
  $L_1(X)\succ 0$, then $L_2(X)\succ 0$ and
 hence $\cD_{L_1}\subseteq \cD_{L_2}$.
 Hence the completely positive map $\tau$ (together with a choice
 of basis) gives  rise to an LMI domination.

\subsection{An Example}\label{sec:nonScalar}
 The following example illustrates the constructs of the
 previous two subsections. Let
$$L_1(x_1,x_2)=
 I+ \begin{bmatrix}  0 & 1 & 0 \\
                     1 & 0 & 0\\
                     0 & 0 & 0
                      \end{bmatrix} x_1 +
\begin{bmatrix}  0 & 0 & 1 \\
                     0 & 0 & 0\\
                     1 & 0 & 0
                      \end{bmatrix} x_2 =
\begin{bmatrix}  1 & x_1 & x_2 \\
                     x_1 & 1 & 0\\
                     x_2 & 0 & 1
                      \end{bmatrix}\in\mbS\R^{3\times 3}\ax
$$
and
$$
L_2(x_1,x_2)=I+ \begin{bmatrix}1  & 0 \\ 0 & -1
\end{bmatrix} x_1 +  \begin{bmatrix} 0  & 1 \\ 1 & 0
\end{bmatrix} y_2 =  \begin{bmatrix} 1+x_1  & x_2 \\ x_2 & 1-x_1
\end{bmatrix}\in\mbS\R^{2\times 2}\ax.
$$Then
\[
\begin{split}
\cD_{\cL_1} & =\{(X_1,X_2)\colon  I-X_1^2-X_2^2 \succ0\}, \\
\cD_{\cL_1}(1) & =\{(X_1,X_2)\in\R^2\colon  X_1^2+X_2^2< 1\}, \\
\cD_{\cL_2}(1) & =\{(X_1,X_2)\in\R^2\colon  X_1^2+X_2^2< 1\}.
\end{split}
\]
Thus $\cD_{\cL_1}(1)=\cD_{\cL_2}(1)$. On one hand,
$$
\left( \begin{bmatrix} \frac 12 & 0 \\ 0 & 0 \end{bmatrix},
\begin{bmatrix} 0& \frac 34  \\ \frac 34  & 0 \end{bmatrix}\right)
\in\cD_{L_1}\setminus\cD_{L_2},$$
so $\cL_1(X_1,X_2)\succ0$ does not imply $\cL_2(X_1,X_2)\succ0$.

On the other hand, $\cL_2(X_1,X_2)\succ0$ does imply
$\cL_1(X_1,X_2)\succ0$.
The map $\tau:\cS_2\to\cS_1$ in our example is given by
$$
\begin{bmatrix}1  & 0 \\ 0 & 1
\end{bmatrix}
\mapsto
\begin{bmatrix}  1 & 0 & 0 \\
                     0 & 1 & 0\\
                     0 & 0 & 1
                      \end{bmatrix},\quad
\begin{bmatrix}1  & 0 \\ 0 & -1
\end{bmatrix}
\mapsto
\begin{bmatrix}  0 & 1 & 0 \\
                     1 & 0 & 0\\
                     0 & 0 & 0
                      \end{bmatrix}, \quad
\begin{bmatrix} 0  & 1 \\ 1 & 0
\end{bmatrix}
\mapsto
\begin{bmatrix}  0 & 0 & 1 \\
                     0 & 0 & 0\\
                     1 & 0 & 0
                      \end{bmatrix}.
$$
Consider the extension of $\tau$ to a unital linear $*$-map
$\psi:\R^{2\times2}\to\R^{3\times3}$, defined by
$$
E_{11}\mapsto
\frac 12
\begin{bmatrix}  1 & 1 & 0 \\
                     1 & 1 & 0\\
                     0 & 0 & 1
                      \end{bmatrix}  , \;
E_{12}
\mapsto
\frac 12
\begin{bmatrix}  0 & 0 & 1 \\
                     0 & 0 & 1\\
                     1 & -1 & 0
                      \end{bmatrix}, \;
E_{21} \mapsto
\frac 12
\begin{bmatrix}  0 & 0 & 1 \\
                     0 & 0 & -1\\
                     1 & 1 & 0
                      \end{bmatrix} ,\;
E_{22}
\mapsto
\frac 12
\begin{bmatrix}  1 & -1 & 0 \\
                     -1 & 1 & 0\\
                     0 & 0 & 1
                      \end{bmatrix}.
$$
(Here $E_{ij}$ are the $2\times 2$ matrix units.)
 To show that $\psi$ is completely positive
 compute its Choi matrix defined as
\beq\label{eq:choi2x2}
C=\begin{bmatrix}
\psi(E_{11}) & \psi(E_{12})\\
\psi(E_{21}) & \psi(E_{22})
\end{bmatrix}.
\eeq
\cite[Theorem 3.14]{Pau}
 says $\psi$ is completely positive if and only if $C\succeq 0$.
 The Choi matrix is the key to
 computational algorithms in \cite[Section 4]{HKMlmi}.
 In the present case, to see that $C$ is positive semidefinite, note
$$
C=\frac 12 W^*W\quad \text{ for }\quad W=
\begin{bmatrix}
1 & 1 & 0 & 0 & 0 & 1 \\
 0 & 0 & 1 & 1 & -1 & 0 \\
\end{bmatrix}.
$$

 Now $\psi$ has a very nice
 representation:
\beq\label{ex:greatExample}
\psi(S)=\frac12 V_1^*SV_1+\frac 12V_2^*SV_2= \frac 12 \begin{bmatrix} V_1 \\ V_2\end{bmatrix}^*
\begin{bmatrix} S & 0\\ 0 & S\end{bmatrix}
\begin{bmatrix} V_1 \\ V_2\end{bmatrix}
\eeq
 for all $S\in\R^{2\times 2}$.
(Here
$
V_1=
\begin{bmatrix}
1 & 1 & 0  \\
 0 & 0 & 1 \\
\end{bmatrix}
$
and
$V_2=
\begin{bmatrix}
 0 & 0 & 1 \\
1 & -1 & 0 \\
\end{bmatrix},
$
thus
$
W = \begin{bmatrix} V_1 & V_2\end{bmatrix}.
$)
In particular,
\beq\label{ex:great2}
2L_1(x,y)= V_1^* L_2(x,y)V_1+V_2^* L_2(x,y)V_2.
\eeq
Hence $L_2(X_1,X_2)\succ0 $ implies $L_1(X_1,X_2)\succ0$, i.e.,
$\cD_{L_2}\subseteq\cD_{L_1}$.

 The computations leading up to equation \eqref{ex:great2}
 illustrate the proof of our linear Positivstellensatz,
 Theorem \ref{thm:intro1}.  For the details see 
\cite[Section 3.1]{HKMlmi}.

\subsection{Positivstellensatz on a Spectrahedron}
 Our non-commutative techniques 
 lead to
 a cleaner and more powerful commutative
 Putinar-type Positivstellensatz \cite{Put} for $p$ strictly
 positive on a bounded spectrahedron $\ocD_L(1)=\{x\in\R^g : L(x)\succeq0\}$.
 In the theorem which follows, $\SRdd\cy$ is the set  of symmetric
 $d\times d$ matrices with entries from
 $\mathbb R[y]$, the algebra of (commutative)
 polynomials with coefficients from $\mathbb R$.
 Note that an element of $\SRdd\cy$ may be identified
 with a polynomial (in commuting variables)
 with coefficients from $\SRdd$.

\begin{thm}\label{thm:commposIntro}
Suppose $L\in\SRdd\cy$ is a monic linear pencil
and $\ocD_\cL(1)$ is bounded. Then for every symmetric matrix polynomial
$p\in\R^{\ell\times\ell}\cy$ with
$p|_{\ocD_\cL(1)}\succ0$, there are
$A_j\in\R^{\ell\times\ell}\cy$,
and $B_k\in\R^{d\times\ell}\cy$
satisfying
\beq\label{eq:cpos1Intro}
p=\sum_j A_j^*A_j + \sum_k B_k^* \cL B_k.
\eeq
\end{thm}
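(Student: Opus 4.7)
The plan is to follow a Hahn--Banach / GNS / spectral theorem strategy of Putinar type, adapted to the matrix-polynomial setting. Let
$$\mathcal M=\Bigl\{\sum_j A_j^*A_j+\sum_k B_k^*LB_k:A_j\in\R^{\ell\times\ell}\cy,\ B_k\in\R^{d\times\ell}\cy\Bigr\}$$
be the cone in which we must exhibit $p$. The outline is: suppose for contradiction $p\notin\mathcal M$, separate $p$ from $\mathcal M$ by a symmetric linear functional via Hahn--Banach, use that functional in a GNS-type construction to manufacture commuting bounded self-adjoint operators $X_j$ together with a vector $v$ satisfying $\langle p(X)v,v\rangle<0$ and $L(X)\succeq 0$, and then apply the joint spectral theorem for commuting self-adjoints to produce a point of $\ocD_L(1)$ where $p$ fails to be positive definite, contradicting the hypothesis.

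Two preparatory ingredients are needed. First, compactness of $\ocD_L(1)$ together with continuity of $p$ and strict positivity on $\ocD_L(1)$ yield $\varepsilon>0$ with $p-\varepsilon I\succeq 0$ on $\ocD_L(1)$. Second, and crucially, one must establish the \emph{archimedean} property of $\mathcal M$: there exists $N$ with $N-\sum_j y_j^2\in\mathcal M$. This is where boundedness of $\ocD_L(1)$ enters and where the non-commutative tools of the paper play the decisive role. Boundedness of $\ocD_L(1)$ corresponds, in the sense of the matrix-cube result of Section 2.2, to an inclusion of a cube of sufficiently large radius being impossible and hence to an LMI inclusion; by Theorem \ref{thm:intro1} (applied to the pencil $L$ and the linear pencil certifying $N-\sum y_j^2\succeq 0$ once one reformulates the ``$N$-ball" as an LMI-cover using Schur complements) such an inclusion is witnessed by a representation exactly of the form required to place $N-\sum y_j^2$ in $\mathcal M$.

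Equipped with the archimedean property, Hahn--Banach produces a symmetric linear functional $\varphi$ on matrix polynomials (of sufficiently high truncated degree) with $\varphi(\mathcal M)\ge 0$ and $\varphi(p)<0$. The GNS construction applied to the positive semidefinite sesquilinear form $\langle a,b\rangle_\varphi:=\varphi(b^*a)$ on $\R^{1\times\ell}\cy$, modulo its null-space, yields a real Hilbert space $\mathcal H$ on which multiplication by $y_j$ defines commuting bounded self-adjoint operators $X_j$, boundedness flowing from the archimedean estimate. The conditions $\varphi(B^*LB)\ge 0$ for all $B\in\R^{d\times\ell}\cy$ and $\varphi(p)<0$ translate respectively into $L(X)\succeq 0$ as an operator inequality and the existence of a unit vector $v\in\mathcal H$ with $\langle p(X)v,v\rangle<0$. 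Since the $X_j$ mutually commute and are self-adjoint, the joint spectral theorem expresses them via a projection-valued measure $E$ on a compact subset of $\R^g$; the condition $L(X)\succeq 0$ forces $\mathrm{supp}(E)\subseteq\ocD_L(1)$, and integrating $\langle p(X)v,v\rangle=\int v^*p(y)v\,dE_{v,v}(y)$ produces some $y_0\in\ocD_L(1)$ with $p(y_0)$ not positive definite---the desired contradiction. The main obstacle is unquestionably the archimedean step: one must leverage the matrix structure of $L$ together with the non-commutative Linear Positivstellensatz to force the absorbing element $N-\sum y_j^2$ into the rather restricted cone $\mathcal M$, a maneuver not available in purely commutative Putinar-style arguments and the reason the non-commutative viewpoint yields a \emph{cleaner} commutative conclusion than one obtains by commutative means alone.
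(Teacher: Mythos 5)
Your proposal is correct and follows essentially the route the paper indicates (and that the cited reference \cite{HKMlmi} carries out): prove that the quadratic module $\{\sum_j A_j^*A_j+\sum_k B_k^*LB_k\}$ is archimedean by containing $\cD_L$ in an LMI-represented ball and invoking the Linear Positivstellensatz, Theorem \ref{thm:intro1}, to extract $N-\sum_j y_j^2$, and then run the Putinar-style separation/GNS/spectral-theorem argument. The one point to phrase more carefully is the passage from boundedness of $\ocD_L(1)$ to the hypothesis of Theorem \ref{thm:intro1}, which requires the containment $\cD_L\subseteq\cD_{L_N}$ at \emph{every} matrix size; this is supplied not by the matrix-cube discussion you allude to but by the fact, quoted in the paper from \cite[Proposition 2.4]{HKMlmi}, that $\cD_L$ is bounded if and only if $\cD_L(1)$ is bounded.
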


 The Positivstellensatz, Theorem \ref{thm:commposIntro},
 has a non-commutative version
 for $\dd\times\dd$ matrix valued symmetric polynomials $p$
 in non-commuting variables positive on a nc LMI set $\ocD_L,$ see \cite{HKMlmi}.
 In the case this matrix valued polynomial $p$ is linear,
 this Positivstellensatz reduces to
 Theorem \ref{thm:intro1}, which can thus be regarded as a
``Linear Positivstellensatz''.
 For perspective we mention that the proofs of our Positivstellens\"atze
 actually rely on the linear Positivstellensatz.
 For experts we point out that the key
 reason LMI sets behave better is
 that the quadratic module associated to a
 monic linear pencil $L$ with \emph{bounded} $\ocD_L$ is \emph{archimedean}.

 We shall return to the topic of Positivstellens\"atze in
 Section \ref{sec:posss&nullss}.

\section{Non-commutative Convex semi-algebraic Sets are LMI Representable}\label{sec:conv=LMI}
 The main result of this section is that
   a bounded convex
 \ncboL semi-algebraic
 set 
 has a monic Linear Matrix Inequality
 representation.
 Applications
 and connections to semidefinite programming and
 linear systems engineering are discussed
 in Section \ref{subsec:motivate}.
  The work is also of interest in understanding a
  non-commutative (free) analog of convex
  semi-algebraic sets \cite{BCR}.

  For perspective, in the commutative case
  of a basic open semi-algebraic subset $\cC$
  of $\RR^{g}$, 
  there is a stringent condition, called the
 ``\emph{line test}'', which, in addition to convexity,
  is  necessary for $\cC$ to have an LMI representation.
  In two dimensions the line test is necessary and sufficient,
  \cite{HV07}, a result used by Lewis-Parrilo-Ramana \cite{LPR05}
  to settle a 1958 conjecture of Peter Lax on hyperbolic polynomials.
  Indeed LMI representations are closely tied to properties
  of hyperbolic polynomials; see this volume, the survey of Helton
and Nie.

 In summary, if a (commutative)
 bounded basic open semi-algebraic convex set
 has an LMI representation, then it must pass
 the highly restrictive line test; whereas
 a  \ncbbo semi-algebraic set has
 an LMI representation if and only if it is convex.

  A subset $\cS$ of $\smatng$ is closed under unitary conjugation
  if for every $X=(X_1,\dots,X_g)\in \cS$ and $U$ is a $n\times n$
  unitary, we have $U^*XU=(U^*X_1U,\dots, U^* X_gU)\in \cS.$
  The sequence $\cC=(\cC(n))_{n\in\N},$ where
  $\cC(n)\subseteq\smatng,$ is a {\bf non-commutative set}
  \index{non-commutative set}
  if it is closed under unitary conjugation
  and direct sums; i.e., if $X=(X_1,\dots,X_g)\in\cC(n)$
  and $Y=(Y_1,\dots,Y_g)\in\cC(m)$, then 
  $X\oplus Y= (X_1\oplus Y_1,\dots,X_g\oplus Y_g)\in\cC(n+m)$.
  Such
  set $\cC$
  has an {\bf LMI representation}
  \index{LMI representation}
  if there is a monic linear pencil $L$
  such that
$$
 \cC= \cD_L.
$$
  Of course, if $\cC=\cD_L$,
  then the closure $\overline \cC$ of $\cC$ has the representation
  $\{ X \colon L(X) \succeq 0\}$ and so we could also  refer to
  $\overline \cC$ as having an LMI representation.

  Clearly, if $\cC$ has an LMI representation, then
  $\cC$ is  a  convex \ncbo semi-algebraic set.
  The
  main result of this section is the converse, under the
  additional assumption that $\cC$ is bounded.

  Since we are dealing with matrix convex sets, it is not surprising
 that 
 the starting point for our analysis is the  matricial version
 of the Hahn-Banach Separation  Theorem of
 Effros and Winkler \cite{EW97} which
 says that given a point $x$
 not inside a matrix convex set there is a (finite) LMI which
 separates $x$ from the set.  For a general matrix convex
 set $\mathcal C$, the conclusion is then that there is a collection,
 likely infinite, of finite LMIs which cut out $\mathcal C$.

  In the case $\mathcal C$ is matrix convex and also semi-algebraic,
  the challenge 
  is to prove that there is actually
  a finite collection of (finite) LMIs which define $\mathcal C$.
  The techniques used to meet this challenge
  have little relation to previous work on
  convex non-commutative basic semi-algebraic sets.
  In particular, they do not involve non-commutative
  calculus and positivity. See \cite{HMlmi} for the details.

\subsection{Non-commutative Basic Open Semi-Algebraic  Sets}
 \label{subsec:semi-algebraic}
  Suppose  $p\in \ccP^{\dddd}$ is symmetric.
  In particular, $p(0)$ is a  $\dddd$ symmetric
  matrix. Assume that $p(0)\succ 0$.
  For each positive integer $n$, let
\[
  \posn = \{X\in\smatng \colon p(X)\succ0\},
\]
  and define $\pos$ to be the sequence
  (graded set) $(\posn)_{n=1}^\infty$.
  Let   $\cDpn$ denote the
 connected component of $0$ of $\posn$ and
 $\cD_p$ the sequence (graded set) $(\cDpn)_{n=1}^\infty$.
  We call $\cD_p$  {\bf the positivity  set} of $p$.
  \index{positivity set} \index{$\posn$} \index{$\cD_p$}
 In analogy with classical real algebraic geometry
 we call sets of the form $\cD_p$
 {\bf \ncbo semi-algebraic sets}.
 \index{semi-algebraic set, basic open non-commutative}
  (Note that
  it is not necessary to explicitly consider
  intersections of \ncbo semi-algebraic sets
  since the intersection $\cD_p \cap \cD_q$
  equals $\cD_{p\oplus q}$.)

\begin{remark}\rm
  By a simple affine linear
  change of variable the point $0$ can be
  replaced by $\lambda\in\mathbb R^g$.
  Replacing $0$ by a fixed $\Lambda\in\smatng$
  would require an extension of the theory.
\qed
\end{remark}

\subsection{Convex Semi-Algebraic Sets}
 \label{subsec:convex-basic}
 To say that $\cD_p$ is {\bf convex} \index{convex} means that each
  $\cDpn$ is convex (in the usual sense) and in this case
  we say $\cD_p$ is
  a {\bf convex \ncboL semi-algebraic set}.
 \index{convex non-commutative basic open non-commutative semi-algebraic set}
  In addition, we generally assume that
  $\cD_p$ is bounded; i.e.,
  there is a constant $K$ such for each $n$ and
  each $X\in \cD_p(n)$, we have
  $\|X\|=\sum \|X_j\|\le K.$
  \index{bounded}  Thus the following
  list of conditions summarizes our
  usual  assumptions on $p$.

\begin{assumption}
 \label{assume}
   Fix $p$ a $\dd \times \dd$  symmetric matrix
  of polynomials in $g$ non-commuting variables of  degree $d$.
  Our standard assumptions are:
 \begin{enumerate}[\rm (1)]
  \item $p(0)$ is positive definite;
  \item $\mathcal D_p$ is bounded; and
  \item $\mathcal D_p$ is convex.
 \end{enumerate}
\end{assumption}
 \index{Assumption \ref{assume}}

\subsection{The Result}
 \label{subsec:main-results}
  Our main theorem of this section is
\begin{thm}[\cite{HMlmi}]
 \label{thm:main}
   Every convex \ncbboL semi-algebraic set
   $($as in Assumption {\rm\ref{assume})}
   has an LMI representation.
\end{thm}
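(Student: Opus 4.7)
The plan is to combine the matricial Hahn--Banach separation theorem of Effros and Winkler with first-order information extracted from the defining polynomial $p$, and then to reduce an a priori infinite family of supporting LMIs to a single monic linear pencil using the algebraic structure of $p$.

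First, I would invoke the Effros--Winkler theorem: since each $\cDpn$ is a convex set closed under unitary conjugation and direct sums, for every $X^0 \notin \overline{\cD_p}$ there is a monic linear pencil $L$ with $\cD_p \subseteq \cD_L$ while $L(X^0) \not\succ 0$. Consequently $\overline{\cD_p}$ is an intersection (a priori an uncountable one, of pencils of unbounded size) of closed LMI sets, and every boundary point of $\cD_p$ is a contact point of at least one such supporting pencil.

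Second, I would extract explicit supporting pencils from the first-order data of $p$. At a boundary point $X^0 \in \partial \cDpn$, pick a unit vector $v$ with $p(X^0) v = 0$. For any symmetric tuple $H$ such that $X^0 + tH$ remains in $\cD_p$ for small $t>0$, the scalar function $t \mapsto \langle p(X^0+tH)v, v\rangle$ is nonnegative and vanishes at $t=0$, so $\langle p'(X^0)[H] v, v\rangle \geq 0$. Convexity of $\cDpn$ promotes this to an inequality valid for every direction from $X^0$ into $\cD_p$, producing an affine-linear functional in $H$ whose coefficients are polynomial in the entries of $X^0$ and quadratic in $v$. Packaging these functionals over all kernel vectors $v$ at $X^0$ yields a truly linear supporting pencil at $X^0$, arising from genuinely polynomial data.

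Third, the heart of the argument is to reduce the family of supporting pencils to finitely many and to assemble them into a single monic linear pencil $L$ with $\cD_L = \cD_p$. The plan here is to exploit the semi-algebraic nature of the contact locus: the set of pairs $(X^0, v)$ for which $p(X^0)v = 0$ is a semi-algebraic variety whose complexity depends only on $\dd$, $g$, and $\deg p$, not on $n$. Using that $\cD_p$ respects unitary conjugation and direct sums, the resulting family of supporting functionals can be organized into a module of bounded rank, and this uniform rank bound together with boundedness of $\cD_p$ (which normalizes supporting pencils and rules out escape directions) should produce a single fixed-size LMI cutting out $\cD_p$ at every level $n$ simultaneously.

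The main obstacle is precisely the last finiteness reduction. Effros--Winkler is abstract and gives no control on the sizes of separating pencils, so it cannot yield a finite LMI representation for a general matrix convex set; indeed most matrix convex sets fail to be LMI representable. Consequently the proof cannot proceed by pure convex analysis on $\cD_p$ alone, and I would not expect non-commutative calculus or a Positivstellensatz for $p$ to be directly useful (as the authors explicitly warn). Rather, the crux will be a dimension count on the algebraic contact locus forcing uniform size bounds on the supporting pencils --- this is the step where I anticipate the bulk of the technical work to lie.
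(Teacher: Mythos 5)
Your first step (Effros--Winkler separation) and your diagnosis of where the difficulty lies (passing from an a priori infinite family of supporting pencils of uncontrolled size to a single finite monic pencil) both match the paper's description of the proof. But the proposal has a genuine gap exactly at that crucial step: your third paragraph does not contain an argument. Saying that the contact locus is ``a semi-algebraic variety whose complexity depends only on $\dd$, $g$, and $\deg p$, not on $n$'' is not true as stated --- the pairs $(X^0,v)$ with $p(X^0)v=0$ live in $(\mathbb{S}\R^{n\times n})^g\times\R^{n\dd}$, a space whose dimension grows with $n$, and the uniformity over $n$ is precisely the thing to be proved, not an input. The mechanism the actual proof uses, and which is absent from your sketch, is a concrete dimension count on the subspace $\span\{w(X^0)v : w \text{ a word of degree} \le d\}$ attached to a kernel vector $v$ of $p(X^0)$ at a boundary point: this subspace has dimension at most $\nu=\dd\sum_{j=0}^d g^j$ \emph{independently of $n$}, and compressing boundary data to it shows that membership in $\cD_p$ is already detected at a single bounded matrix size. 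That is what makes a single pencil of size roughly $\nu(\nu+1)/2$ (as in Theorem \ref{thm:mainQuant}) possible. Your phrase ``organized into a module of bounded rank \ldots should produce a single fixed-size LMI'' gestures at this but supplies neither the object nor the bound, and you yourself flag it as the place where the work lies.

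A secondary but real divergence: your second step builds supporting pencils from the first-order data $\langle p'(X^0)[H]v,v\rangle$, i.e.\ from non-commutative calculus. The paper explicitly states that the techniques used to meet the finiteness challenge ``do not involve non-commutative calculus and positivity,'' so this is not merely a cosmetic difference in route; the derivative-based tangent functionals do not feed into the finiteness reduction, and there is no indication that they can be made to. Also note that most bounded matrix convex nc sets are \emph{not} LMI representable, so any correct proof must use the polynomial (finite-degree) nature of $p$ in an essential, quantitative way --- which is exactly what the $\nu$-dimensional span of $\{w(X^0)v\}$ accomplishes and what your proposal leaves open.
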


The proof of Theorem \ref{thm:main}  yields estimates
on the size of the representing LMI.

\begin{thm}
 \label{thm:mainQuant}
 Suppose $p$ satisfies the conditions
 of Assumption {\rm\ref{assume}}.
 Thus $p$ is a symmetric $\dd\times\dd$-matrix polynomial
 of degree $d$ in $g$ variables.
 Let $\nu=\dd \sum_{j=0}^d g^j$.
\ben[\rm (1)]
\item
 There
 is a $\mu\le \frac{\nu(\nu+1)}{2}$
 and a monic linear pencil $L\in\mbS\R^{\mu\times\mu}\ax$
such that
 $\cD_p=\cD_L$.
\item
  In the case that $p(0)=I_\dd$, the estimate on
  the size of the matrices 
in  $L$ reduces  to $\frac{\nus(\nus+1)}{2}$,
  where $\nus= \dd \sum_{j=0}^\Nd g^j$.
\een
\end{thm}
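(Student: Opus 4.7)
The plan is to obtain both estimates by tracking dimensions through the construction that establishes Theorem~\ref{thm:main}. That argument proceeds via the Effros--Winkler matricial Hahn--Banach separation: at each boundary point of $\cD_p$ one extracts a supporting monic linear pencil, and the polynomiality of $p$ is then used to compress the (a priori infinite) family of such pencils down to a single finite one. The quantitative claims amount to pinning down the dimension of the natural ambient space in which these pencils live.

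For part (1), observe that the supporting LMI data at a boundary point $X \in \partial\cD_p(n)$ is indexed by a kernel vector $v \in \R^\dd \otimes \R^n$ of $p(X)$. Since $p \in \ccP^{\dd\times\dd}$ has degree $d$, the coefficient pattern of the map $v \mapsto p(X)v$ involves only words in $x_1, \ldots, x_g$ of length at most $d$; equivalently, the relevant moment data at $(X,v)$ belongs to the space of $\dd$-valued nc polynomial vectors of degree $\le d$, which has dimension exactly $\nu = \dd\sum_{j=0}^d g^j$. Consequently every truly linear pencil extracted this way is represented by a symmetric matrix acting on this $\nu$-dimensional ambient space, and the subspace of symmetric $\nu \times \nu$ matrices used in the construction has dimension at most $\nu(\nu+1)/2$. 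Choosing a basis of this subspace then produces a monic pencil $L(x) = I + \sum_j A_j x_j$ with matrices of size $\mu \le \nu(\nu+1)/2$ satisfying $\cD_p = \cD_L$.

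For part (2), the additional hypothesis $p(0) = I_\dd$ enables the standard Gram (``middle matrix'') compression. Any symmetric $\dd\times\dd$-valued nc polynomial of degree $d$ with $p(0) = I_\dd$ admits a representation $p(x) = V(x)^* M V(x)$, where $V(x)$ is the column of $\dd$-valued monomials in $x_1, \ldots, x_g$ of degree at most $\Nd = \lceil d/2\rceil$ (of length $\nus = \dd\sum_{j=0}^\Nd g^j$) and $M$ is a constant symmetric matrix with $V(0)^* M V(0) = I_\dd$. This halves the effective degree, so running the ambient-space argument of (1) with $\nus$ in place of $\nu$ yields a monic pencil of size $\mu \le \nus(\nus+1)/2$.

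The main obstacle is the compression step in (1): Effros--Winkler a priori produces a family of supporting pencils parametrized by all of $\partial\cD_p = \bigcup_n \partial\cD_p(n)$, an uncountable set, and one must show that the pencil data at every boundary point lies in the single fixed $\nu$-dimensional ambient space. This is where the polynomial structure of $p$ enters essentially: the boundary relation $p(X)v = 0$ is polynomial of degree $d$ in $X$, which confines the separating linear functionals to the finite-dimensional space of coefficients indexed by words of length $\le d$ (and analogously $\le \Nd$ in part (2)). Once this uniform degree bound is in hand, elementary linear algebra completes the estimate.
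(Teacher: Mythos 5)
First, a caveat about the comparison: the survey does not actually prove Theorem \ref{thm:mainQuant}; it records only that ``the proof of Theorem \ref{thm:main} yields estimates on the size of the representing LMI'' and defers entirely to \cite{HMlmi}. Your overall strategy --- Effros--Winkler matricial separation, compression of boundary data $(X,v)$ with $p(X)v=0$ into the $\nu$-dimensional space of $\dd$-vector-valued nc polynomials of degree at most $d$, and a dimension count in $\mbS\R^{\nu\times\nu}$ --- is the same route that \cite{HMlmi} follows, so the skeleton is right. But the two steps that carry the actual quantitative content are asserted rather than argued.

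In part (1), the sentence ``choosing a basis of this subspace then produces a monic pencil $L$ with matrices of size $\mu\le\nu(\nu+1)/2$'' is not a construction. Knowing that every separating pencil may be taken to act on a space of dimension at most $\nu$ leaves you with an a priori infinite family of pencils of bounded size; the naive way to combine $k$ of them is a direct sum, of size $k\nu$, so you must explain both why finitely many suffice and why the count that emerges is $\nu(\nu+1)/2=\dim\mbS\R^{\nu\times\nu}$ rather than, say, $\nu$ times a Carath\'eodory bound in $(\mbS\R^{\nu\times\nu})^g$. The identification of $\mu$ with the dimension of the space of symmetric bilinear forms on $\R^{\nu}$ --- roughly, one dimension per linearly independent boundary ``state'' --- is the heart of the estimate and is missing. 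In part (2), your argument proves too much: the middle-matrix representation $p=V^*MV$, with $V$ the border vector of monomials of degree at most $\Nd$, exists for \emph{every} symmetric $\dd\times\dd$ polynomial of degree $d$, whether or not $p(0)=I_\dd$. As written you never use the hypothesis $p(0)=I_\dd$, so the same reasoning would yield the sharper bound $\nus(\nus+1)/2$ unconditionally and render part (1) vacuous. The actual mechanism by which $p(0)=I_\dd$ cuts the degree of the words spanning the compression subspace from $d$ down to $\Nd$ has not been located, and that is precisely what part (2) requires.
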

As usual,
 $\Nd $ stands for   the smallest integer $\geq 
\frac{d}{2}$. Of course
$$
  \Nd=\frac{d}{2} \text{ when $d$ is even}
  \quad \text{ and }\quad \Nd=\frac{d+1}{2} \text{ when $d$ is odd}.
$$
\index{$\Nd= \frac{d}{2}$   if $d$ even}
\index{$\Nd= \frac{d+1}{2}$ if $d$  odd}

The results above hold even if sets more general than $\cD_p$ are used.
Suppose $p(0)$ is invertible and
define $\cI_p$ to be the component of $\{ X \colon p(X)$ is invertible$\}$
containing 0. Then if $\cI_p$ is bounded and convex,
the theorems above still hold for $\cI_p$; it has an LMI representation.

An unexpected  consequence of Theorem \ref{thm:main} is that
projections of non-commutative semi-algebraic sets may not be semi-algebraic.
For details and proofs see \cite{HMlmi}.

\subsection{Motivation}\label{subsec:motivate}
 One of the main advances in systems engineering in the 1990's
 was the conversion of a set of problems to LMIs,
 since LMIs, up to modest size, can be solved numerically
 by semidefinite programs \cite{SIG97}.
 A large  class of linear
 systems problems are described in terms of
 a signal-flow diagram $\Sigma$ plus $L^2$
 constraints (such as energy dissipation).
 Routine methods convert such problems into
 a non-commutative polynomial inequalities
 of the form  $p(X)\succeq 0$ or $p(X)\succ 0$.

 Instantiating
 specific  systems of linear differential
 equations  for the ``boxes'' in the system flow diagram
 amounts to substituting  their coefficient matrices
 for variables in the polynomial $p$.
 Any property  asserted to  be true must hold
 when matrices of any size are substituted into $p$.
 Such problems are referred to as dimension free.
 We emphasize, the polynomial $p$ itself is determined
 by the signal-flow diagram $\Sigma$.

Engineers vigorously seek convexity, since optima are global
and convexity lends itself to numerics.
Indeed,  there are over  a thousand papers trying to convert
linear systems problems to convex ones and the only known technique is
the rather blunt trial and error instrument of trying to guess an LMI.
Since  having an LMI is seemingly  more restrictive than convexity,
there has been the hope, indeed expectation, that some practical
class of convex situations   has been missed.
  The problem solved here
  (though not operating at full engineering generality,
  see \cite{HHLM08})
  is a paradigm for the type of algebra occurring
  in systems problems governed by signal-flow diagrams;
 such physical problems directly present non-commutative semi-algebraic sets.
  Theorem \ref{thm:main} gives compelling evidence
  that all such convex situations are associated to some LMI.
Thus we think the implications of our results here are negative
for linear systems engineering; for dimension free problems
there is no convexity beyond LMIs.

  A basic question regarding the range of   applicability of SDP is:
  which sets have an LMI representation?
  Theorem \ref{thm:main} settles, to a reasonable extent, the case where
 the variables are non-commutative
 (effectively dimension free matrices).

\section{Convex Polynomials}
\label{sec:convexPolys}
 We turn now from non-commutative convex sets to
 non-commutative convex polynomials. 
  If $p$ is concave ($-p$ is convex) and monic, then 
  the set $S=\{X:p(X)\succ 0\}$ is a convex \ncboL. If it
  is also bounded, then, by the results of the previous
  section, it has an LMI representation.  However, much
  more is true and the analysis turns on a nc
  version of the Hessian and connects with nc semi-algebraic
  geometry.

 A symmetric polynomial $p$ is {\bf matrix convex},
 \index{convex polynomial} or simply {\bf convex} for short,
  if for each positive integer $n$, each
 pair of tuples $X\in\smatng$ and $Y\in\smatng$, and each $0\le t \le
 1$,
\begin{equation}
 \label{eqn:matrixconvex} p(tX+(1-t)Y)\preceq tp(X)+(1-t)p(Y).
\end{equation}
 Even in one-variable, convexity in the non-commutative setting
 differs from convexity in the commuting case because here $Y$ need
 not commute with $X$. For example, to see  that the polynomial
 $p=x^4$ is not matrix convex, let
$$
 X=\begin{bmatrix} 4&2\\2&2\end{bmatrix}
  \mbox{ and }
 Y=\begin{bmatrix} 2&0\\0&0\end{bmatrix}
$$
 and compute
$$
 \frac12 X^4+\frac12 Y^4 - \left ( \frac12 X+\frac12 Y \right)^4 =
 \begin{bmatrix}164&120\\120&84\end{bmatrix}
$$
 which is not positive semidefinite. On the other hand,  to verify
 that $x^2$ is a matrix convex polynomial, observe that
\[
  tX^2+(1-t)Y^2 - (tX+(1-t)Y)^2
=t(1-t)(X-Y)^2 \succeq 0.
\]

 It is possible to automate checking for convexity, rather
 than depending upon lucky choices of $X$ and $Y$ as was
 done above.
 The theory described in \cite{CHSY03},
 leads to and validates
 a symbolic algorithm for determining regions of convexity
 of \NC polynomials and even 
 of \NC rational functions (for \NC rationals see \cite{KVV09,HMV06})
 which is implemented in NCAlgebra.

 Let us illustrate it on the example $p(x)=x^4$.
 The NCAlgebra command is

\centerline{{\bf NCConvexityRegion}[Function $F$, \{Variables $x$\}].}

\begin{verbatim}
In[1]:=  SetNonCommutative[x];
In[2]:=  NCConvexityRegion[ x**x**x**x, {x} ]
Out[2]:= { {2, 0, 0},  {0, 2}, {0, -2} }
\end{verbatim}

\noindent
  which we interpret as
  saying that $p(x)=x^4$ is convex on the set of matrices
  $X$ for which the
  the $3\times 3$ block matrix valued non-commutative function
\begin{equation}
\label{eq:x4diag}
  \rho(X)= \begin{bmatrix}
      2& 0 & 0 \\ 0 & 0 & 2 \\ 0 & -2 & 0
   \end{bmatrix}
\end{equation}
  is positive semidefinite.
  Since $\rho(X)$ is constant and never positive semidefinite,
  we conclude that $p$ is {\it nowhere} convex.

 This example is a simple special case of the following theorem.

\begin{thm}[{\rm\protect{\cite{HM04conv}}}]
\label{thm:matrixconvexI}
   Every convex symmetric polynomial
   in the free algebra $\RRx$
    has degree two or less.
\end{thm}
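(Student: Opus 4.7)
The approach I would take rests on the standard fact that matrix convexity of $p$ is detected by its second directional derivative (the non-commutative Hessian), followed by a structural analysis that forces the degree down to two.

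\textbf{Step 1: Hessian characterization.} First I would prove that $p$ is matrix convex if and only if
\[
p''(x)[h] \;:=\; \frac{d^2}{dt^2}\bigg|_{t=0} p(x+th)
\]
satisfies $p''(X)[H] \succeq 0$ for every $n$ and every pair $X, H \in \smatng$. The forward direction comes from applying the defining inequality \eqref{eqn:matrixconvex} to $tX+(1-t)Y$ with $Y = X + sH$, dividing by $s^2$, and letting $s \to 0$; the reverse comes from integrating. Equivalently, the scalar function $t \mapsto \langle \xi, p(X+tH)\xi\rangle$ is convex for every vector $\xi$, which reduces the problem to ordinary calculus.

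\textbf{Step 2: Border-vector / middle-matrix decomposition.} Because $p''(x)[h]$ is a non-commutative polynomial that is homogeneous of degree $2$ in $h$, every monomial appearing in it has the form $u(x)\,h_i\,w(x)\,h_j\,v(x)$ for words $u,w,v$ in $x$. Collecting these monomials one obtains a canonical representation
\[
p''(x)[h] \;=\; V(x)[h]^{\T}\, Z(x)\, V(x)[h],
\]
where $V(x)[h]$ is a column vector whose entries are the words $w(x)\,h_j$ with $j \in \{1,\dots,g\}$ and $w$ ranging over a suitable finite set of words in $x$ (of length at most $\lceil (d-2)/2\rceil$), and $Z(x)$ is a symmetric matrix polynomial in $x$ alone.

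\textbf{Step 3: Positivity transfers to the middle matrix.} Next I would argue that the uniform inequality $p''(X)[H]\succeq 0$ for all $X, H$ upgrades to the pointwise matrix inequality $Z(X)\succeq 0$ for every $X$. The key observation is that if one fixes $X \in \smatng$ of sufficiently large size and lets $H$ vary over $\smatng$, then the vectors $V(X)[H]\xi$ sweep out all of the appropriate ambient space (because the words $w(X)$ become linearly independent operators for generic large $X$, a standard dimension-counting / Amitsur-type argument). Hence positivity of the sandwich is equivalent to positivity of the middle.

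\textbf{Step 4: Degree obstruction (the hard part).} This is the crux. Write $Z(x) = Z_0 + Z_1(x) + \cdots + Z_{d-2}(x)$ by homogeneous $x$-degree and inspect the top piece $Z_{d-2}(x)$. A monomial of $p$ of top degree $d \geq 3$, say $x_{i_1}x_{i_2}\cdots x_{i_d}$, contributes terms of the form $w(x)\,h_a\,w'(x)\,h_b\,w''(x)$ to $p''(x)[h]$, and these terms populate specific \emph{off-diagonal} slots of $Z(x)$ at degree $d-2$, while the diagonal entries of $Z(x)$ in those same rows and columns come from monomials of $p$ of strictly smaller $x$-degree in the relevant positions. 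One then exhibits a $2\times 2$ principal submatrix of $Z(X)$ at appropriately chosen $X$ whose off-diagonal entry has higher order than the two diagonal entries, so that scaling $X$ makes the determinant of this submatrix strictly negative, contradicting $Z(X) \succeq 0$. The only way to avoid this contradiction is $d \leq 2$.

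The main obstacle is Step 4: one must set up the border vector in Step 2 carefully enough that the leading-degree entries of $Z(x)$ can be read off in a form where the off-diagonal vs.\ diagonal degree comparison is transparent. Steps 1 and 2 are largely bookkeeping, and Step 3 is a routine non-degeneracy argument. Once the degree asymmetry in $Z_{d-2}(x)$ is identified, the obstruction to pointwise positivity is immediate and delivers the theorem.
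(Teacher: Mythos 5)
Your outline is correct, but it takes a genuinely different route from the proof sketched in this survey --- in fact it is essentially the argument of the original sources \cite{CHSY03,HM04conv} (border vector and middle matrix), whereas the proof here runs through the non-commutative sum-of-squares theorem. Both proofs share your Step 1: convexity forces the Hessian $p''(x)[h]$ to be matrix positive (Proposition \ref{prop:posPoly}; only that direction is needed). From there the survey invokes Theorem \ref{thm:posPoly} to write $p''(x)[h]=\sum_j f_j(x,h)^* f_j(x,h)$ with each $f_j$ linear in $h$, and finishes with a short combinatorial argument: if $m\ge 1$ is the largest integer for which some $f_j$ contains a term $h x^m$ (such a term must occur when $\deg p\ge 3$), then $f_j^*f_j$ contains $hx^{2m}h$, which nothing can cancel. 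You instead isolate the middle matrix $Z(x)$, transfer positivity of the Hessian to pointwise positivity of $Z(X)$ via the spanning lemma of \cite{CHSY03}, and read the contradiction off a $2\times 2$ principal submatrix. The survey's argument is shorter but leans on the full strength of the nc Positivstellensatz; yours is more self-contained and yields localized information --- it is exactly what underlies the \textbf{NCConvexityRegion} algorithm and the analysis of regions of convexity --- at the price of having to prove the non-degeneracy lemma in your Step 3.

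Two corrections to your write-up. First, the border vector entries should be $h_j\,w(x)$ rather than $w(x)\,h_j$: with your convention $V(x)[h]^* Z(x) V(x)[h]$ only produces monomials beginning and ending in $h$, and misses terms such as $hhxx$; with $h_j\,w(x)$ one gets the required form $u(x)^*\,h_i\,v(x)\,h_j\,w(x)$ with words on the outside. Second, in Step 4 the decisive feature is sharper than a degree comparison under scaling: the diagonal entry of $Z$ indexed by a border word of degree $d-2$ is \emph{identically zero} (a nonzero entry there would force $\deg p''\ge 2d-2>d$), while the off-diagonal entry pairing it with the empty border word has a nonzero constant term contributed by a top-degree monomial of $p$. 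Hence the relevant $2\times 2$ principal submatrix already has negative determinant at generic $X$, with no scaling required.
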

\index{convex degree 2 Theorem}

\subsection{The Proof of Theorem \ref{thm:matrixconvexI} and its
 Ingredients}
  Just as in the commutative case, convexity of a
  symmetric $p\in \RRx$ is
  equivalent to positivity of its Hessian
  $q(x)[h]$ which is a polynomial
  in the $2g$ variables $x=(x_1,\ldots,x_g)$
  and $h=(h_1,\ldots,h_g)$.
  Unlike the commutative case, a positive non-commutative
  polynomial is a sum of squares.  
  Thus, if $p$ is convex, then its Hessian $q(x)[h]$
  is a sum of squares.  Combinatorial
  considerations say that a Hessian which
  is also a sum of squares must come from
  a polynomial of degree two.
  In the remainder
  of this section we flesh out this argument,
  introducing the needed definitions, techniques,  and results.

\subsubsection{Non-commutative Derivatives}
 For practical purposes, the 
 {\bf $k^{th}$-directional derivative} of a nc polynomial $p$ is given 
 \index{directional derivative}
 by
$$
 p^{(k)}(x)[h]= \frac{d^k}{dt^k} p(x+th) \Big|_{t = 0}.
$$
 Note that $p^{(k)}(x)[h]$ is homogeneous of degree $k$ in $h$
  and moreover, if $p$ is symmetric so is $p^{k}(x)[h]$.
  For $X,H\in\smatng$ observe that
   \begin{equation*}
     p^\prime(X)[H]=\lim_{t\to 0} \frac{p(X+tH)-p(X)}{t}.
   \end{equation*}

\begin{example}\rm
 \label{ex:two}
 The one variable $p(x)=x^4$ has first derivative
$$
 p^{\prime}(x)[h]= hxxx + xhxx  + xxhx + xxxh.
$$
 Note each term is linear in $h$ and $h$ replaces each occurrence
 of $x$ once and only once.
 The Hessian, or second derivative, of $p$ is
\[
 p^{\prime\prime}(x)[h]=
   2hhxx   + 2hxhx + 2 hxxh
  + 2xhhx   + 2 xhxh  + 2 xxhh.
\]
 Note each term is degree two  in $h$ and $h$ replaces each pair of
 $x$'s exactly once. 
\end{example}

\begin{thm} [\cite{HP07}]
\label{thm:matrixderivI}
   Every  symmetric polynomial $p\in\RRx$
   whose $k^{th}$ derivative is a matrix positive polynomial
    has degree $k$ or less.
\end{thm}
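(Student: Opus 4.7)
The plan is to combine the sum-of-squares representation of matrix positive nc polynomials (the non-commutative Positivstellensatz for the empty constraint set, to be discussed in Section \ref{sec:posss&nullss}) with a combinatorial analysis of top-degree monomials. Viewed as a symmetric polynomial in the $2g$ free symmetric variables $(x,h)$, matrix positivity of $q(x,h):=p^{(k)}(x)[h]$ yields a decomposition
\[
p^{(k)}(x)[h]=\sum_{j} r_j(x,h)^{*}r_j(x,h),\qquad r_j\in\R\langle x,h\rangle.
\]
First I would dispose of parity: if $k$ is odd, then $q(x,-h)=-q(x,h)$ is also matrix positive (since $-H$ is symmetric whenever $H$ is), forcing $q\equiv 0$ and $\deg p\le k-1$. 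So henceforth $k$ is even, and substituting $h\mapsto\lambda h$ and matching $h$-homogeneous components in the sum of squares allows each $r_j$ to be taken homogeneous of degree $k/2$ in $h$.

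Next set $d:=\deg p$ and suppose for contradiction that $d>k$. Since differentiation trades an $x$ for an $h$, the top total-degree component of $p^{(k)}(x)[h]$ equals $(p_d)^{(k)}(x)[h]$, with $p_d$ the degree-$d$ homogeneous part of $p$; on the right the top part must come from the $r_j$ of maximal total degree $d/2$, which already forces $d$ to be even (otherwise the degree-$d$ coefficients of $\sum r_j^{*}r_j$ all vanish while $(p_d)^{(k)}\neq 0$). Expanding each such $r_{j,d/2}$ over the monomial basis $\mathcal V$ of bidegree $((d-k)/2,k/2)$ in $(x,h)$ gives a Gram-style identity $(p_d)^{(k)}(x)[h]=\sum_{v,u\in\mathcal V} M_{v,u}\,v^{*}u$ with $M=(M_{v,u})\succeq0$. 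The key structural observation is that for any word $w$ of length $d$ the factorization $w=v^{*}u$ with $|v|=|u|=d/2$ is unique ($u$ is the second half of $w$, $v$ is the reverse of the first half), and $v,u\in\mathcal V$ holds if and only if each half of $w$ contains exactly $k/2$ letters from $\{h_1,\dots,h_g\}$. Consequently the coefficient of any such $w$ on the right-hand side vanishes unless $w$ is so balanced.

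The remaining step is to exhibit an unbalanced word on the left, which I expect to be the main obstacle only insofar as it requires the right choice. Pick any $w'=x_{j_1}x_{j_2}\cdots x_{j_d}$ appearing with nonzero coefficient $c_{w'}$ in $p_d$ and form
\[
w''=h_{j_1}h_{j_2}\cdots h_{j_k}\,x_{j_{k+1}}\cdots x_{j_d},
\]
obtained by replacing the first $k$ letters of $w'$ with the corresponding $h$-variables. A direct Leibniz count shows $w''$ has coefficient $k!\,c_{w'}\neq 0$ in $(p_d)^{(k)}(x)[h]$. Its first half, however, contains $\min(k,d/2)$ letters from $\{h_1,\dots,h_g\}$, which equals $k/2$ only when $d=k$; since $d>k$ this is impossible, contradicting the balance condition and forcing $d\le k$. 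The genuinely delicate point I anticipate is the bookkeeping in the middle paragraph: justifying that no cancellation among the $r_j$'s can shrink the leading total-degree of $\sum r_j^{*}r_j$—which ultimately uses that a nonzero sum of hermitian squares in a free algebra does not vanish at its leading homogeneous level—and confirming that the $h$-homogeneous reduction is compatible with the total-degree decomposition.
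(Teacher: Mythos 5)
Your argument is correct and follows the same overall strategy as the paper's: matrix positivity of $p^{(k)}$ yields a sum-of-squares decomposition via Theorem \ref{thm:posPoly}, and a contradiction is then extracted combinatorially from the top-degree part of that decomposition when $\deg p>k$. The combinatorial endgame is executed differently, though. The paper (which only sketches the case $k=2$ on the example $p=x^4$, deferring the general case to \cite{HP07}) locates the largest $m$ for which a monomial of the form $x^m h$ occurs in some SOS factor and observes that the corresponding square contributes $hx^{2m}h$, whose $x$-degree is too large to be cancelled. You instead reduce to factors that are $h$-homogeneous of degree $k/2$ and total-degree homogeneous of degree $d/2$, note that every length-$d$ word produced by $\sum_j r_j^*r_j$ must then carry exactly $k/2$ letters from $h$ in each half, and exhibit an unbalanced word $w''$ with nonzero coefficient $k!\,c_{w'}$ on the left; this handles general $k$ uniformly, and your $h\mapsto -h$ disposal of odd $k$ is clean. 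One sentence needs repair: for odd $d$ it is \emph{not} true that ``the degree-$d$ coefficients of $\sum_j r_j^*r_j$ all vanish'' (cross terms $r_{j,e}^*r_{j,e'}$ with $e+e'=d$ survive in general). The correct statement is that the top total degree of $\sum_j r_j^*r_j$ equals twice the maximal degree of the $r_j$, precisely because the leading homogeneous level $\sum_j (r_j)_{\mathrm{top}}^*(r_j)_{\mathrm{top}}$ is a nonvanishing sum of hermitian squares --- the very fact you flag at the end, which is established by evaluating on matrix tuples. With that fix the proof is complete.
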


\proof \ See \cite{HP07} for the full proof
  or \cite{HM04conv} for case of $k=2$.
  The very intuitive proof based upon
  a little non-commutative semi-algebraic
  geometry is sketched in the next subsection.
\qed

\subsubsection{A Little \NCRAG}
  The proof of Theorem \ref{thm:matrixconvexI} 
  employs the most fundamental of all 
  non-commutative Positivstellens\"atze.

   A symmetric non-commutative polynomial  $p$ is
   {\bf matrix positive} or simply {\bf positive}
    provided
   $p(X_1, \ldots ,X_g)$ is
   positive semidefinite
   for every $X\in\gtupn$ (and every $n$).
 An example of a matrix positive polynomial
 is a {\bf Sum of Squares}
 of  polynomials, meaning an expression of the
 form
$$
 p(x)=
  \sum_{j=1}^c h_j(x)^\TT  h_j(x).
$$
 Substituting $X \in \gtupn$ gives
$
  p(X)=\sum_{j=1}^c h_j(X)^\TT  h_j(X) \succeq 0.
$
Thus $p$ is positive.  Remarkably these are the
only  positive non-commutative polynomials.

\begin{thm}[\cite{H}]
 \label{thm:posPoly}
   Every matrix positive  polynomial is a sum of squares.
\end{thm}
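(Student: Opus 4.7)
The plan is to prove the contrapositive by a Hahn--Banach / GNS-style construction: if $p$ is symmetric and not a sum of squares, then there exists a tuple of symmetric matrices $X$ and a vector $v$ with $\langle p(X) v, v \rangle < 0$, contradicting matrix positivity. Throughout, fix $d$ with $2d\ge \deg p$ and work inside the finite-dimensional vector space $\RRx_{2d}$.

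\medskip
\noindent\textbf{Step 1 (Separation).} Let $\Sigma^2 \subseteq \RRx_{2d}$ denote the convex cone of sums $\sum_i h_i^{*} h_i$ with $h_i\in \RRx_d$. The first task is to show that $\Sigma^2$ is a \emph{closed} convex cone in $\RRx_{2d}$; this is a Carath\'eodory-style argument bounding the number of squares one needs (using that $\RRx_d$ is finite-dimensional) and then invoking compactness to pass to limits. Once closedness is known, since by hypothesis $p\notin\Sigma^2$, the Minkowski (Hahn--Banach) separation theorem for closed convex cones produces a linear functional $L:\RRx_{2d}\to\RR$ satisfying
\[
L(q^{*} q)\ge 0 \text{ for all } q\in \RRx_d, \qquad L(p) < 0.
\]

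\medskip
\noindent\textbf{Step 2 (GNS construction).} On $\RRx_d$ define the symmetric bilinear form $\langle f,g\rangle_L := L(f^{*} g)$. Positivity on squares makes this form positive semi-definite; let $N=\{f:\langle f,f\rangle_L=0\}$ (well-defined by Cauchy--Schwarz) and set $\cH := \RRx_d/N$, a finite-dimensional real Hilbert space. Denote by $\bar f$ the class of $f$ in $\cH$ and by $e=\bar 1$ the distinguished cyclic vector.

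\medskip
\noindent\textbf{Step 3 (Symmetric operators).} On the subspace $V\subseteq\cH$ that is the image of $\RRx_{d-1}$, define $X_j \bar f := \overline{x_j f}$. This is well-defined on $V$ (if $\bar f=0$ then Cauchy--Schwarz gives $L((x_j f)^{*}(x_j f))=0$), and it satisfies the partial symmetry relation $\langle X_j v_1, v_2\rangle_L = L(f_1^{*} x_j f_2) = \langle v_1, X_j v_2\rangle_L$ for $v_i=\bar f_i \in V$, because $x_j^{*}=x_j$. In finite dimensions any linear map from a subspace with this symmetry property extends to a genuine symmetric operator $\tilde X_j:\cH\to\cH$ (e.g.\ by setting $\tilde X_j|_{V^\perp} w := \sum_i \langle X_j e_i, w\rangle e_i$ for an orthonormal basis $\{e_i\}$ of $V$). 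Let $X=(\tilde X_1,\ldots,\tilde X_g)$.

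\medskip
\noindent\textbf{Step 4 (Contradiction).} It remains to verify $\langle p(X) e, e\rangle_L = L(p)$. By linearity it suffices to check this word by word: for any monomial $w=x_{i_1}\cdots x_{i_k}$ with $k\le 2d$, set $j=\lfloor k/2\rfloor$ and move $j$ operators across by symmetry:
\[
\langle \tilde X_{i_1}\!\cdots\!\tilde X_{i_k} e,\, e\rangle_L
 = \langle \tilde X_{i_{j+1}}\!\cdots\!\tilde X_{i_k} e,\ \tilde X_{i_j}\!\cdots\!\tilde X_{i_1} e\rangle_L.
\]
Both strings now have length $\le d$, and a short induction on degree shows that applying $\le d$ operators to $e$ acts as honest left multiplication (at each intermediate step the vector lies in $V$, where $\tilde X_j$ agrees with $X_j$). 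Hence the right side equals $L(w)$, and summing gives $\langle p(X) e, e\rangle_L = L(p) < 0$, contradicting $p(X)\succeq 0$.

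\medskip
\noindent\textbf{Main obstacles.} The two technical hearts of the argument are (a) the closedness of $\Sigma^2$ in $\RRx_{2d}$ --- which fails without a uniform bound on the number of squares needed and must be handled by a dimension-count / compactness argument --- and (b) the verification that the symmetrically extended operators $\tilde X_j$ still compute $\langle p(X)e,e\rangle_L$ as $L(p)$; this succeeds only because of the degree budget $k\le 2d$, which lets us split every word into two halves of length $\le d$ using self-adjointness of $\tilde X_j$, so the extension outside $V$ never enters the computation.
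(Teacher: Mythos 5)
Your proof follows the same route as the paper's own argument (given there for the refinement, Theorem \ref{thm:free SOS}): closedness of the truncated sum-of-squares cone via a Carath\'eodory bound, Minkowski/Hahn--Banach separation, and a GNS construction. There is, however, a concrete gap in Step 3. You define $X_j\bar f=\overline{x_jf}$ on the image $V$ of $\RRx_{d-1}$ and justify well-definedness by asserting that $L(f^*f)=0$ forces $L\bigl((x_jf)^*(x_jf)\bigr)=0$ ``by Cauchy--Schwarz.'' To run Cauchy--Schwarz you must write $L(f^*x_j^2f)$ as $\langle x_j^2f,\,f\rangle_L$ and dominate it by $\langle x_j^2f,x_j^2f\rangle_L^{1/2}\,\langle f,f\rangle_L^{1/2}$; but when $\deg f=d-1$ the element $x_j^2f$ has degree $d+1$, and $\langle x_j^2f,x_j^2f\rangle_L=L(f^*x_j^4f)$ lives in degree $2d+2$, outside the truncation $\RRx_{2d}$ on which $L$ is defined and known to be nonnegative on squares. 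This is exactly the ``flatness'' obstruction familiar from truncated moment problems: positivity of the form on $\RRx_d$ does not make its kernel invariant under multiplication by $x_j$ at the top degree, so the operators $X_j$ need not be well defined on all of $V$. Your degree budget $2d\ge\deg p$ is exactly tight, and Step 4 genuinely needs strings of length $d$, so the problematic top degree cannot be avoided as written.

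The repair is standard, and the paper's hypothesis already encodes it: in Theorem \ref{thm:free SOS} one takes $2k\ge d+2$, i.e.\ one extra degree of slack. Concretely, either (a) require $2d\ge\deg p+2$ and define $X_j$ only on the image of $\RRx_{d-2}$; then $x_j^2f\in\RRx_d$ and your Cauchy--Schwarz argument is legitimate, while the word-splitting in Step 4 only ever needs to apply $d-1$ operators to $e$, so the intermediate vectors stay where $X_j$ is defined; or (b) perturb the separating functional to $L+\epsilon L_0$, where $L_0$ is strictly positive on nonzero squares (such functionals exist on the finite-dimensional truncation --- this is the ``positive functionals separate points'' assertion quoted in the paper from \cite{HMP1} --- and $L(p)<0$ is an open condition), so that the null space $N$ is trivial and no quotient is taken at all. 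With either repair, the rest of your argument --- the symmetric extension off $V$ and the half-word bookkeeping in Step 4, which is the part the paper's sketch leaves implicit --- is correct.
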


 This theorem is just a sample of the structure of \NC semi-algebraic
 geometry, the topic of Section \ref{sec:posss&nullss}.

 Suppose  $p\in\RRx$ is (symmetric and) convex
 and  $Z,H\in\smatng$ and $t\in\mathbb R$ are given.
 In the definition of convex, choosing $X=Z+tH$
 and $Y=Z-tH$, it follows that
\begin{equation*}
  0\preceq p(Z+tH)+p(Z-tH) -2 p(Z),
\end{equation*}
  and therefore
\begin{equation*}
  0\preceq \lim_{t \rightarrow 0}\frac{p(X+tH)+p(X-tH)-2p(X)}{t^2}= p^{\prime\prime}(X)[H].
\end{equation*}
 Thus the Hessian of $p$ is matrix positive
 and since, in the non-commutative setting,
 positive polynomials are sums of squares
 we obtain the following theorem.

\begin{prop}
 \label{prop:posPoly}
  If $p$ is matrix convex, then its Hessian $p^{\prime\prime}(x)[h]$
  is a sum of squares.
\end{prop}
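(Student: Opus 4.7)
The plan is to reduce the statement to two ingredients that are essentially in hand: (i) matrix convexity forces the Hessian to be matrix positive (a symbolic version of the classical fact that a $C^2$ convex function has positive semidefinite Hessian), and (ii) every matrix positive nc polynomial is a sum of squares, which is exactly Theorem~\ref{thm:posPoly}. So the argument is to establish (i) by a midpoint computation, and then quote (ii).

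First I would fix an integer $n$, tuples $X,H\in\smatng$, and a real parameter $t\neq 0$. Applying the defining inequality \eqref{eqn:matrixconvex} of matrix convexity with the two points $X+tH$ and $X-tH$ and the weight $\tfrac{1}{2}$ yields
\[
p(X) \;=\; p\!\left(\tfrac{1}{2}(X+tH)+\tfrac{1}{2}(X-tH)\right)\;\preceq\;\tfrac{1}{2}p(X+tH)+\tfrac{1}{2}p(X-tH),
\]
which rearranges to $0\preceq p(X+tH)+p(X-tH)-2p(X)$. Dividing by $t^2>0$ and letting $t\to 0$, the standard second-difference quotient converges entrywise to $p''(X)[H]$, and positivity is preserved in the limit. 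Hence $p''(X)[H]\succeq 0$ for every $n$ and every pair $(X,H)\in\smatng\times\smatng$. Since $p''(x)[h]$ is a symmetric nc polynomial in the $2g$ variables $(x_1,\dots,x_g,h_1,\dots,h_g)$, this says precisely that $p''(x)[h]$ is matrix positive in the sense of Section~\ref{sec:convexPolys}.

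Second, I would apply Theorem~\ref{thm:posPoly} to $q(x,h):=p''(x)[h]\in\RR\langle x,h\rangle$. That theorem immediately delivers a decomposition $q=\sum_j f_j^{\ast}f_j$ for finitely many nc polynomials $f_j$ in $(x,h)$, which is the desired sum of squares representation of the Hessian.

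I do not expect a real obstacle here; the main thing to check carefully is that the midpoint inequality really converges to the Hessian in the matrix sense (not just entrywise after some transposition), which follows from the fact that $p''(X)[H]$ equals the limit of the symmetric expression $t^{-2}\bigl(p(X+tH)+p(X-tH)-2p(X)\bigr)$ of symmetric matrices, and that the cone of positive semidefinite matrices is closed. The heavy lifting — producing the sum of squares from mere positivity — is entirely outsourced to Theorem~\ref{thm:posPoly}.
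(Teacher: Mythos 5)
Your argument is correct and is essentially identical to the paper's: the paper also applies the convexity inequality at the midpoint of $X+tH$ and $X-tH$ to get $0\preceq p(X+tH)+p(X-tH)-2p(X)$, divides by $t^2$ and passes to the limit to conclude the Hessian is matrix positive, and then invokes Theorem~\ref{thm:posPoly} to obtain the sum of squares decomposition. No gaps.
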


\subsubsection{Proof of Theorem {\rm\ref{thm:matrixconvexI}} by example}
  Here we illustrate the proof of Theorem \ref{thm:matrixconvexI}
  based upon
  Proposition \ref{prop:posPoly} by showing that 
  $p(x)=x^4$ is not matrix convex. Indeed, if $p(x)$ is matrix convex, then
  ${p^{\prime\prime}(x)}[h]$ is matrix positive and therefore,
  by Proposition \ref{prop:posPoly}, there exists
  a $\ell$ and polynomials $f_1(x,h),\ldots,f_\ell(x,h)$ such that
\begin{equation*}
 \begin{split}
  \frac 12p^{\prime\prime}(x)[h]&=  hhxx + hxhx +  hxxh + xhhx+ xhxh + xxhh \\
  &=  f_1(x,h)^\TT f_1(x,h) + \cdots + f_\ell(x,h)^\TT f_\ell(x,h).
 \end{split}
\end{equation*}
  One can show that each $f_j(x,h)$ is linear in $h$.
  On the other hand,
  some term $f_i^\TT f_i$ contains $hhxx$ and thus
  $f_i$ contains $hx^2$.  Let $m$ denote
  the largest $\ell$ such that some  $f_j$ contains the term $h
 x^\ell.$
  Then $m\ge 1$ and for such $j$, the product
$f_j^\TT f_j$ contains the term $h x^{2m}h$
  which cannot be cancelled out, a contradiction. \qed

 The proof of the
 more general, order $k$ derivative,
 is similar, see \cite{HP07}.

\subsection{Non-commutative Rational and Analytic  Functions}

A class of functions bigger than nc polynomials is given by nc analytic
functions, see e.g.~ Voiculescu \cite{Vo04,Voprept}  
or the forthcoming paper of 
Kaliuzhnyi-Verbovetskyi and Vinnikov for an introduction.
The rigidity of nc bianalytic maps is investigated by Popescu \cite{Po10}; see
also \cite{HKMS09,HKMprept,HKMfree}.
For other properties of nc analytic functions, a very interesting body
of work, e.g.~by Popescu \cite{PoAdv} can be used as a gateway.

The articles \cite{BGM06a,KVV09,HMV06}
deal with non-commutative rational functions.
For instance, \cite{HMV06} shows that if a non-commutative rational function
is convex in an open set, then it is the Schur Complement of some
monic linear pencil.

\section{Algebraic Certificates of Positivity}
\label{sec:posss&nullss}

In this section we give a brief overview 
of various
free $*$-algebra analogs  to the classical 
Positivstellens\"atze, i.e., 
theorems
characterizing polynomial inequalities in a purely
algebraic way.
Here it is of benefit to consider free \emph{non-symmetric} variables.
That is, let $x=(x_1,\ldots,x_g)$ be non-commuting variables
and $x^*=(x_1^*,\ldots,x_g^*)$ another set of non-commuting variables.
Then 
   $\R\axs$ \index{$\R\axs$} is the free $*$-algebra of polynomials in the
   non-commuting indeterminates $x,x^*.$

There is a natural involution $*$ on $\R\axs$ induced by
$x_i\mapsto x_i^*$ and $x_j^*\mapsto x_j$. As before,
$p\in\R\axs$ is symmetric if $p=p^*$. An element of
the form $p^*p$ is a square, and $\Sigma^2$ denotes
the convex cone of all sums of squares.
Given a matrix polynomial $p=\sum_w p_w w\in\R\axs^{\dd\times\ddp}$ and
$X\in(\Rnn)^g$, we define the evaluation
 $p(X,X^*)$ by analogy with evaluation in the symmetric variable case.

\subsection{Positivstellens\"atze}
\label{subsec:Positivstellensatz}
 This subsection gives an indication of various
 free $*$-algebra analogs  to the classical theorems
 characterizing polynomial inequalities in a purely
 algebraic way.
 We will start by sketching a proof of the following
 refinement of Theorem \ref{thm:posPoly}.

\begin{thm}[\cite{H}]
\label{thm:free SOS} \index{Sum of Squares Theorem, Free}
Let $p \in
\FF_d$ be a non-commutative polynomial. If
 $p(M,M^\T) \succeq 0$ for all $g$-tuples of linear operators $M$ acting
 on a
Hilbert space of dimension at most $N(k):=\dim \R\axs_k$ with $2k \geq d+2$,
then $ p \in \Sigma^2$.
\end{thm}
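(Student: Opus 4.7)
The plan is to prove the contrapositive: if $p \notin \Sigma^2$, construct a $g$-tuple of operators $M$ on a Hilbert space of dimension at most $N(k)$ such that $\langle p(M,M^*)v, v \rangle < 0$ for some vector $v$. The strategy is a truncated GNS construction built from a separating functional, modeled on the standard argument in commutative real algebraic geometry but with careful bookkeeping of degrees.

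First I would handle the separation step. Let $\Sigma_k^2 := \{\sum_i q_i^* q_i : q_i \in \R\axs_k\}$ be the cone of sums of squares using polynomials of degree at most $k$; it sits inside the finite-dimensional space $\R\axs_{2k}$, and one checks it is closed (a standard Carath\'eodory-plus-compactness argument, since we may bound the number of summands by the dimension of $\R\axs_{2k}$). Since $p$ has degree $d \le 2k-2$, in particular $p \in \R\axs_{2k}$. If $p$ is not a sum of squares then $p \notin \Sigma_k^2$, so by Hahn--Banach there is a linear functional $L\colon \R\axs_{2k} \to \R$ with $L(p) < 0$ and $L(q^*q) \ge 0$ for every $q \in \R\axs_k$. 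Replacing $L$ by $\tfrac12(L + L\circ {*})$ we may assume $L(f^*) = L(f)$; this symmetrization preserves both the positivity on squares and (using that the matrix positivity hypothesis forces $p=p^*$) the strict negativity $L(p) < 0$.

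Next I would perform the truncated GNS construction. On $V := \R\axs_k$ define the bilinear form $\langle q, r\rangle_L := L(r^* q)$, which is well-defined since $r^*q \in \R\axs_{2k}$; it is symmetric by Step 1 and positive semidefinite by the separation inequality. Quotienting by its radical $N = \{q \in V : L(q^*q) = 0\}$ (a subspace by Cauchy--Schwarz), one obtains a real Hilbert space $H$ of dimension $\dim H \le \dim V = N(k)$. Now I would define the operators $M_j$ on $H$ so that $M_j [q] = [x_j q]$ and $M_j^*[q] = [x_j^* q]$ whenever the representative $q$ has degree at most $k-1$, extending to $H$ linearly. The point is that for such $q$, $x_j q$ and $x_j^* q$ live in $\R\axs_k = V$, so the right-hand sides make sense in $H$. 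The formal adjoint relation $\langle M_j[q], [r]\rangle_L = L(r^* x_j q) = L((x_j^* r)^* q) = \langle [q], M_j^* [r]\rangle_L$ is then automatic from the definitions.

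The core computation is then the identity $\langle p(M, M^*)[1], [1]\rangle_L = L(p)$. To see this, take any word $w$ of degree $\le d \le 2k-2$, and split $w = w_1 w_2$ with $|w_1|, |w_2| \le k-1$; such a split exists precisely because $2k \ge d+2$. Working outwards from $[1]$, induction on word length shows $w_2(M,M^*)[1] = [w_2]$ and $w_1^*(M,M^*)[1] = [w_1^*]$, since at every intermediate step the argument remains in $\R\axs_{k-1}$ and the next letter keeps us inside $\R\axs_k$. Therefore
\[
\langle w(M,M^*)[1], [1]\rangle_L = \langle w_2(M,M^*)[1], w_1^*(M,M^*)[1]\rangle_L = L\big((w_1^*)^* w_2\big) = L(w),
\]
and summing over words gives $\langle p(M,M^*)[1], [1]\rangle_L = L(p) < 0$. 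Since $\dim H \le N(k)$, the hypothesis forces $p(M,M^*) \succeq 0$, contradicting $L(p) < 0$.

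The main obstacle is the well-definedness of $M_j$ as an operator on the quotient $H$: we need $q - q' \in N$ and $q, q' \in \R\axs_{k-1}$ to imply $x_j(q - q') \in N$, and Cauchy--Schwarz applied to $\langle f, q-q'\rangle_L$ only controls inner products against $f \in \R\axs_k$, not the required $f = x_j^* x_j (q - q')$ which lies in $\R\axs_{k+2}$. I would handle this precisely as in \cite{H}: either by first extending $L$ to a functional on a larger $\R\axs_{2k+2m}$ that remains positive on all $\Sigma_{k+m}^2$ (a Hahn--Banach extension argument), or by rephrasing the whole construction so that $M_j$ is viewed as a linear map from the image of $\R\axs_{k-1}$ into $H$ and the identity above is derived directly from the bilinear form on $V$ without ever asserting $M_j$ is defined on all of $H$ in a coherent way. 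Either route makes the dimension bound $N(k)$ transparent, since it is simply $\dim \R\axs_k$.
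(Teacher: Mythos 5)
Your proposal follows essentially the same route as the paper's proof: closedness of the truncated sum-of-squares cone via Carath\'eodory, separation of $p$ from it by a functional $L$ with $L(p)<0\leq L(f)$ for $f\in\Sigma^2_{2k}$, and a truncated GNS construction yielding operators on a space of dimension at most $N(k)=\dim\R\axs_k$ with $\langle p(M,M^*)\xi,\xi\rangle=L(p)<0$. The one obstacle you flag --- well-definedness of $M_j$ on the quotient --- is resolved by an ingredient the paper's sketch already records, namely that positive functionals separate the points of $\FF_{2k}$, so $L$ may be perturbed to be strictly positive on nonzero squares, making the radical trivial and the quotient (hence the extension of $L$ to higher degree) unnecessary.
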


\begin{proof} Note that a polynomial $p$
satisfying the hypothesis automatically satisfies $p=p^*$.
The only necessary technical result we need is the closedness of
the cone $\Sigma^2_k$ in the Euclidean topology of the finite
dimensional space $\FF_k$. This is done as in the commutative
case, using Carath\'eodory's convex hull theorem,
more exactly,
every polynomial of $\Sigma^2_k$ is a convex combination of at most
$\dim \FF_k +1$ squares (of polynomials). On the other hand the
 positive functionals on $\Sigma^2_k$  separate the
points of $\FF_k$.
See for details \cite{HMP1}.

Assume that $p \notin \Sigma^2$ and let $k \geq (d+2)/2$, so that
$p \in \FF_{2k-2}$. Once we know that $\Sigma_{2k}^2$ is a closed
cone, we can invoke Minkowski separation theorem and find a
symmetric functional $L \in \FF_{2k}'$ providing the strict separation:
$$
L(p)<0 \leq L(f), \quad f \in \Sigma_{2k}^2.$$
Applying the Gelfand-Naimark-Segal construction to $L$ yields
 a tuple $M$ of operators acting on a
Hilbert space $H$ of dimension $N(k)$ and a vector $\xi \in H$,
such that
$$ 0 \leq  \langle p(M,M^\T) \xi, \xi \rangle = L(p) <0,$$
a contradiction.  
\end{proof}

When compared to the commutative framework, this theorem is
stronger in the sense that it does not assume a strict positivity
of $p$ on a well chosen ``spectrum''. Variants with supports (for
instance for spherical tuples
$M: \ M_1^\T M_1 + ...+ M_g^\T M_g \preceq I$)
of the above result are discussed in \cite{HMP1}.

To draw a very general conclusion from the above computations:
when dealing with positivity in a free $*$-algebra, the standard point
evaluations (or more precisely prime or real spectrum evaluations) of the
commutative case are replaced by matrix evaluations of the free
variables.
The positivity can be tailored
to ``evaluations in a
supporting set''. The results pertaining to the resulting algebraic
decompositions are called Positivstellens\"atze, see 
\cite{PD} for details in the commutative setting. We state below
an illustrative and generic result, from \cite{HM04pos}, for sums
of squares decompositions in a free $\ast$-algebra.

\begin{thm}[\cite{HM04pos}]
\label{thm:posspq}
\index{Free Positivstellensatz, with supports}
Let $p=p^* \in \FF$
and let $q = \{ q_1,...,q_k\} \subseteq \FF$ be a set of
symmetric
polynomials, so that
$$\QM(q) = {\rm co} \{ f^\T q_i f; \ f \in \FF, \ 0 \leq i \leq
k\}, \ q_0 =1,$$ contains $1-x_1^\T x_1 -...-x_g^\T x_g$ . If
for all tuples of linear bounded Hilbert space operators $X =
(X_1,...,X_g)$, we have 
\beq
\label{eq:qpos}
 q_i(X,X^\T) \succeq 0, \ 1 \leq i \leq k \quad \Rightarrow\quad
 p(X,X^\T)  \succ 0,
\eeq
then $p \in \QM(q)$.
\end{thm}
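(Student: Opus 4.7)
The plan is to prove Theorem~\ref{thm:posspq} by contradiction via a Hahn--Banach separation argument combined with a GNS construction, closely paralleling the proof sketch of Theorem~\ref{thm:free SOS}. Suppose $p\notin\QM(q)$. I would like to produce a symmetric linear functional $L$ on a finite dimensional piece of $\FF$ that is nonnegative on (a truncation of) $\QM(q)$ but strictly negative on $p$, and then realize this $L$ by a vector state on a bounded operator representation satisfying $q_i(X,X^*)\succeq 0$, thus contradicting \eqref{eq:qpos}.

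The first key step is to set up an appropriate truncation. Fix $k$ with $2k\geq\deg p$ and let $\QM_{2k}(q)$ denote the convex cone of elements in $\FF_{2k}$ expressible as $\sum_{i=0}^{k} \sum_j f_{ij}^*q_i f_{ij}$ with the $f_{ij}$ of degree such that $f_{ij}^* q_i f_{ij}\in\FF_{2k}$ (here $q_0=1$). I then need to show $\QM_{2k}(q)$ is a closed cone in the Euclidean topology on $\FF_{2k}$. This is where the archimedean hypothesis $1-\sum x_j^*x_j\in\QM(q)$ does the real work: on any GNS representation coming from a state nonnegative on $\QM(q)$, the generators $x_j$ act as bounded operators with norm at most one, which in turn gives a uniform bound on the degrees and the norms of the $f_{ij}$ needed to represent any element of $\QM_{2k}(q)$. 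Combined with a Carath\'eodory-style argument (as in the Sum of Squares Theorem) this yields closedness.

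Once closedness is in hand, since $p\notin\QM(q)\supseteq \QM_{2k}(q)$, the Minkowski separation theorem produces a symmetric linear functional $L\in\FF_{2k}'$ with $L(p)<0\leq L(f)$ for every $f\in\QM_{2k}(q)$. Applying the GNS construction to $L$ yields a Hilbert space $H$, a tuple $X=(X_1,\ldots,X_g)$ of operators on $H$, and a cyclic vector $\xi\in H$ so that $L(f)=\langle f(X,X^*)\xi,\xi\rangle$ for $f\in\FF_{2k-2}$. Because $L$ is nonnegative on each $f^*q_i f\in\QM_{2k}(q)$ for $f$ of suitable degree, we get $q_i(X,X^*)\succeq 0$ for $1\le i\le k$; because $L$ is nonnegative on $f^*(1-\sum x_j^*x_j)f$, the operators $X_j$ are bounded (hence the representation extends to all of $\FF$). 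Then \eqref{eq:qpos} forces $p(X,X^*)\succ 0$, yet $\langle p(X,X^*)\xi,\xi\rangle = L(p) < 0$, the desired contradiction.

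The main obstacle I expect is establishing closedness of the truncated cone $\QM_{2k}(q)$, since without archimedeanness cones of this shape need not be closed. The archimedean identity $1-\sum x_j^*x_j\in\QM(q)$ must be exploited carefully to bound simultaneously the degrees appearing in any certificate and the coefficient sizes, so that a compactness argument applies; this is the same phenomenon behind Putinar's classical theorem, transposed to the free setting. Once closedness is secured, the separation/GNS pipeline is standard and the rest of the argument follows the template laid out for Theorem~\ref{thm:free SOS}.
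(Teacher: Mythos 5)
Your overall skeleton (separate $p$ from the quadratic module by a linear functional, then apply GNS) is the right one and is the route the paper points to. But the step you yourself flag as the main obstacle --- closedness of the truncated cone $\QM_{2k}(q)$ --- is a genuine gap, and the argument you sketch for it does not work. A Carath\'eodory bound controls the \emph{number} of summands $f_{ij}^* q_i f_{ij}$, but unlike the pure sum-of-squares cone $\Sigma^2_{2k}$ (where $\sum_j f_j^* f_j = 0$ forces every $f_j=0$, so one can normalize representatives and pass to limits), a sum $\sum f_{ij}^* q_i f_{ij}$ can be small while the individual $f_{ij}$ are large, because the terms attached to different $q_i$ can cancel. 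The observation that the generators act as contractions in any GNS representation of a state nonnegative on $\QM(q)$ gives no bound whatsoever on the polynomials $f_{ij}$ occurring in a particular algebraic certificate; that inference is a non sequitur. Truncated quadratic modules are in general not closed, and archimedeanness does not repair this at the truncated level.

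What the paper means by ``the only difference is in the separation theorem applied'' is precisely that one does \emph{not} truncate and does \emph{not} prove closedness. Instead, the hypothesis $1-x_1^*x_1-\cdots-x_g^*x_g \in \QM(q)$ is used to show that $1$ is an order unit (algebraic interior point) of the cone $\QM(q)$ inside the full space of symmetric elements of $\FF$: for every symmetric $f$ there is an $N$ with $N\pm f \in \QM(q)$. One then invokes the Eidelheit--Kakutani (equivalently, M.~Riesz extension) separation theorem, which separates a point from a convex cone possessing an algebraic interior point and requires no closedness or finite dimensionality. The resulting functional $L$ is defined and nonnegative on all of $\QM(q)$ with $L(p)\le 0$ (and can be arranged with $L(1)=1$), so the GNS representation it induces acts by bounded operators of norm at most $1$ satisfying $q_i(X,X^*)\succeq 0$ with no degree bookkeeping, and then $\langle p(X,X^*)\xi,\xi\rangle = L(p)\le 0$ contradicts the strict positivity $p(X,X^*)\succ 0$ forced by \eqref{eq:qpos}. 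If you want to salvage your write-up, replace the closedness claim for $\QM_{2k}(q)$ by this order-unit separation argument on the untruncated cone; the rest of your GNS endgame then goes through essentially as you describe.
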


Henceforth, call $\QM(q)$ the {\bf quadratic module} generated
by the set of polynomials $q$.

We omit
the proof of Theorem \ref{thm:posspq}, as it is very
similar to the previous proof. The only difference is in
the separation theorem applied. For details, see \cite{HM04pos}.

Some interpretation is needed in degenerate cases, such as those
where no bounded operators satisfy the relations
$q_i(X,X^\T) \succeq 0$.
Suppose for example, if  $\phi$ denotes the defining relations
for the Weyl algebra and the $q_i$ include $- \phi^* \phi$.
In this case, we would say $ p(X,X^\T) \succ 0$,
since there are no $X$ satisfying $q(X,X^\T)$, and
voila $p \in \QM(q)$ as  the theorem says.
A non-archimedean Positivstellensatz for the Weyl algebra,
which treats unbounded representations and eigenvalues of polynomial
partial differential operators, is given in  
\cite{Scmu:05}.

A paradigm practical question with matrix inequalities is:\\

{\it
 Given a non-commutative symmetric polynomial $p(a,x)$ and
 a $n \times n$ matrix tuple $A$, find $X \succeq 0$
 if possible which makes $ p(A,X) \succeq 0$.}\\

 As a refinement of this problem, let $q(a,x)$ be 
 a given nc symmetric polynomial. For a given
 $A$, find $X$ if possible, such that both
 $q(A,X)$ and $p(A,X)$ are positive semidefinite.
 The infeasibility of this latter problem is 
 equivalent to the statement, if $q(A,X)\succeq 0$, then
 $p(A,X)\not\succeq 0$. 
 There is keen interest in numerical solutions 
 of such problems. 
 The next theorem informs us that
 the main issue is the matrix coefficients $A$,
 as it gives a ``certificate of
 infeasibility'' for the problem in the absence of $A$. 

\begin{thm}[The Nirgendsnegativsemidefinitheitsstellensatz
\cite{KS07}]\label{thm:nicht-nsd}
 Let $p=p^* \in \FF$
 and let $q = \{ q_1,...,q_k\} \subset \FF$ be a set of
 symmetric
 polynomials, so that $\QM(q)$
 contains $1-x_1^\T x_1 -...-x_g^\T x_g$ . If
 for all tuples of linear bounded Hilbert space operators $X =
 (X_1,...,X_g)$, we have
\beq
\label{eq:npos}
 q_i(X,X^\T) \succeq 0, \ 1 \leq i \leq k \quad \Rightarrow\quad
 p(X,X^\T)  \not\preceq 0,
\eeq
 then there exists an integer  $r $ and
 $h_1,\ldots, h_r \in \FF$ with $\sum_{i=1}^r h_i^*p h_i\in 1+\QM(q)$.
\end{thm}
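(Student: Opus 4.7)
The plan is to mirror the proof of Theorem~\ref{thm:posspq}, prefaced by a purely algebraic reformulation of the conclusion.

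Set $M := \QM(q \cup \{-p\})$, the quadratic module generated by $q_1,\ldots,q_k$ together with $-p$. The conclusion $\sum_{i=1}^r h_i^* p h_i \in 1 + \QM(q)$ is equivalent to $-1 \in M$: indeed, $\sum_i h_i^* p h_i - 1 = m \in \QM(q)$ rearranges to $-1 = m + \sum_i h_i^*(-p) h_i \in M$, and conversely any element of $M$ splits as an element of $\QM(q)$ plus a sum $\sum_j f_j^*(-p) f_j$. Since $1 - \sum_j x_j^* x_j \in \QM(q) \subseteq M$ by hypothesis, $M$ is an archimedean quadratic module, and the goal reduces to showing $-1 \in M$.

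Suppose, for contradiction, that $-1 \notin M$. The closedness of each truncation $M \cap \FF_{2k}$ in the finite-dimensional space $\FF_{2k}$ (via Carath\'eodory, as used in the proof of Theorem~\ref{thm:free SOS} and detailed in \cite{HMP1}), combined with the archimedean property of $M$, permits a Hahn-Banach separation of $-1$ from $M$ by a symmetric $\R$-linear functional $L : \FF \to \R$ normalised so that $L(1) = 1$ and $L \geq 0$ on $M$. Applying the Gelfand-Naimark-Segal construction to $L$ produces a Hilbert space $H$, a cyclic unit vector $\xi \in H$, and a $*$-representation $\pi : \FF \to \mathcal{L}(H)$ with $L(f) = \langle \pi(f)\xi, \xi\rangle$. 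The archimedean element $1 - \sum_j x_j^* x_j \in M$, together with $L \geq 0$ on $M$, forces $\sum_j \pi(x_j)^*\pi(x_j) \preceq I$, so $X := (\pi(x_1),\ldots,\pi(x_g))$ is a tuple of bounded operators on $H$. Since each $q_i$ and $-p$ lie in $M$, the same cyclicity argument yields $q_i(X,X^*) = \pi(q_i) \succeq 0$ and $p(X,X^*) = \pi(p) \preceq 0$, directly contradicting the hypothesis~\eqref{eq:npos}. Hence $-1 \in M$.

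The main obstacle is the separation step inside the infinite-dimensional algebra $\FF$; it is handled as in \cite{HMP1} by working with the finite-dimensional truncations $\FF_{2k}$, using Carath\'eodory to obtain closedness of $M \cap \FF_{2k}$, and invoking the archimedean hypothesis to assemble the local Minkowski separations into a single global state. Once $L$ is in hand, the remainder is standard GNS machinery; the notable feature of the present theorem is that the weakening of ``$p \succ 0$'' in Theorem~\ref{thm:posspq} to the ``$p \not\preceq 0$'' hypothesis is exactly matched by the weakening of the conclusion from $p \in \QM(q)$ to $-1 \in \QM(q \cup \{-p\})$.
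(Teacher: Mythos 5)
Your proof is correct and takes essentially the same approach as the paper: both arguments reduce the conclusion to showing $-1\in\QM(q\cup\{-p\})$, using the observation that \eqref{eq:npos} makes the joint positivity domain of $q_1,\dots,q_k,-p$ empty. The paper then simply cites Theorem \ref{thm:posspq} applied to this empty feasible set (invoking the vacuous-implication convention discussed after that theorem), whereas you inline the separation/GNS argument that proves it --- the same mathematics, just spelled out.
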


\begin{proof}
 By \eqref{eq:npos},
$$
 \{X\mid q_i(X,X^\T) \succeq 0, \ 1 \leq i \leq k,\, - p(X,X^\T)\succeq0\}=
 \varnothing.
$$
Hence $-1 \in \QM(q,-p)$ by Theorem \ref{thm:posspq}. 
\end{proof}

\subsection{Quotient Algebras}
\label{quotients}
The results from Section \ref{subsec:Positivstellensatz}
allow a variety of specializations to quotient
algebras.
In this subsection we consider  a two sided ideal
$\cI$  of $\FF$
which need not be invariant under $\ast$.
Then one can replace the quadratic module $\QM$ in the statement of
a Positivstellensatz with
$\QM(q) + \cI$,
and apply similar arguments as above.
For instance, the next simple observation can be deduced.

\begin{cor}
Assume, in the hypotheses of
  Theorem {\rm \ref{thm:posspq}},
  that the relations \eqref{eq:qpos} include
some relations of the form $r(X,X^\T) =0$,
even with $r$ not symmetric,
then
\beq
p \in \QM(q) + \cI_{r}
\eeq
where $\cI_{r}$ denotes the two sided
ideal generated by ${r}$.
\end{cor}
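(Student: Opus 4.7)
The plan is to reduce the corollary to Theorem~\ref{thm:posspq} itself by absorbing each equation $r(X,X^*)=0$ into a single symmetric inequality. Observe that for any tuple $X$ of bounded operators, $r(X,X^*)^* r(X,X^*) \succeq 0$ automatically, so the condition $r(X,X^*)=0$ is equivalent to the single scalar inequality
\[
 -r^*(X,X^*)\, r(X,X^*) \succeq 0.
\]
The polynomial $-r^*r \in \FF$ is symmetric, so it is admissible as a generator of a quadratic module, and enlarging the module only makes it bigger, so $\QM(q,-r^*r)$ still contains $1 - x_1^*x_1 - \cdots - x_g^*x_g$; the archimedean hypothesis of Theorem~\ref{thm:posspq} is preserved.

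With this rewriting, the hypothesis of the corollary says: for every operator tuple $X$ with $q_i(X,X^*)\succeq 0$ for $1\le i\le k$ and $-r^*(X,X^*)r(X,X^*)\succeq 0$, one has $p(X,X^*)\succ 0$. Applying Theorem~\ref{thm:posspq} to the augmented family $q'=\{q_1,\ldots,q_k,-r^*r\}$ yields
\[
  p \;\in\; \QM(q,-r^*r).
\]

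It remains to compare $\QM(q,-r^*r)$ with $\QM(q)+\cI_r$. A generic element of $\QM(q,-r^*r)$ is a finite sum $\sum_j f_j^* h_j f_j$ with each $h_j$ equal either to $1$, to one of the $q_i$, or to $-r^*r$. Split this sum into $p_1+p_2$ according to which $h_j$ appears. By definition $p_1\in\QM(q)$, and each summand of $p_2$ has the form $f^*(-r^*r)f = -(f^*r^*)\,r\,f$, which manifestly lies in the two-sided ideal $\cI_r$ generated by $r$. Hence $p_2\in\cI_r$ and $p\in\QM(q)+\cI_r$. If several equations $r_1,\ldots,r_m=0$ appear, the same device applied to each $-r_\ell^* r_\ell$ gives $p\in\QM(q)+\cI_{r_1}+\cdots+\cI_{r_m}$, which is the ideal generated by $r_1,\ldots,r_m$.

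There is no serious obstacle here; the only point demanding a moment of thought is the equivalence of the equality $r=0$ with the symmetric inequality $-r^*r\succeq 0$, and the bookkeeping check that a generator $f^*(-r^*r)f$, while not itself a square, lives inside $\cI_r$ because it carries $r$ as an inner factor. Both are immediate, and the rest is a direct invocation of the preceding Positivstellensatz, in exactly the spirit of the proof of Theorem~\ref{thm:nicht-nsd}.
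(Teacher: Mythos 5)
Your proposal is correct and follows exactly the paper's own route: the paper likewise deduces $p \in \QM(q,-r^*r)$ from Theorem \ref{thm:posspq} and then uses the inclusion $\QM(q,-r^*r) \subset \QM(q)+\cI_r$. You have merely spelled out the two small verifications (the equivalence of $r=0$ with $-r^*r\succeq 0$, and the membership $f^*(-r^*r)f\in\cI_r$) that the paper leaves implicit.
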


\begin{proof}
This follows immediately from
$p \in \QM(q, \; - r^* r)$
which is a consequence of Theorem \ref{thm:posspq}
and the fact
\[
\QM(q, \; - r^* r)  \subset \QM(q)+\cI_{r}.\qedhere
\]
\end{proof}

For instance, we can look at the situation where
 $r$ is the commutator $[x_i,x_j]$
as insisting on positivity of $ q(X)$ only on commuting
tuples of operators, in which case
 the ideal $\cI$ generated by $[x_j^*,x_i^*], \ [x_i,x_j]$
is added to $\QM(q)$.
The classical commuting case is captured by the
corollary applied to the ``commutator
ideal'':
$\cI_{[x_j^*,x_i^*], \ [x_i,x_j], \ [x_i,x_j^*] }$
for $i,j  = \ 1, \ldots, g$
which requires testing only on
commuting tuples of operators drawn from a commuting $C^*$-algebra.
The classical Spectral Theorem,  then converts this
to testing only on $\C^g$, cf.~\cite{HP07}.

The situation where one tests for constrained positivity
in the absence of an archimedean property is thoroughly
analyzed in \cite{Scmu:09}.

\subsection{A Nullstellensatz}
With similar techniques (well chosen, separating,
$\ast$-representations of the free algebra) and a
rather different ``dilation type'' of argument, one can prove a series
of Nullstellens\"atze.

We state for information one of them.
For an early version see \cite{HMP2}.

\begin{thm}
\label{thm:NullSS}
  Let $\pp_1(x),...,\pp_m(x) \in \RRx$
  be polynomials not depending on the $x_j^\T$ variables and let
  $\qq(x,x^\T) \in \FF$. Assume that for every $g$ tuple $X$ of linear
  operators acting on a finite dimensional Hilbert space $H$, and
every vector $v \in   H$, we have:
  $$ (\pp_j(X)v = 0, \ 1 \leq j \leq m) \ \Rightarrow \ (\qq(X,X^\T)v =
  0).$$
  Then $\qq$ belongs to the left ideal $\FF \pp_1 + ... + \FF \pp_m$.
\end{thm}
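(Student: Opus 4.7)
The plan is to argue by contrapositive. Suppose $\qq \notin J := \FF \pp_1 + \cdots + \FF \pp_m$; I will then exhibit a finite-dimensional Hilbert space $H$, a tuple $X = (X_1, \ldots, X_g)$ of operators on $H$, and a vector $v \in H$ with $\pp_j(X) v = 0$ for every $j$ while $\qq(X, X^\T) v \neq 0$. The natural starting object is the left $\FF$-module $M := \FF / J$ with cyclic vector $v_0 := 1 + J$; by construction $\pp_j \cdot v_0 = 0$ for every $j$, and $\qq \cdot v_0 = \qq + J \neq 0$.

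To turn $M$ into a pre-Hilbert space on which $\FF$ acts as a $\T$-algebra, I use a GNS construction. I seek a positive linear functional $\phi : \FF \to \R$ (meaning $\phi(f^\T f) \geq 0$ for all $f$) which vanishes on $J$ and satisfies $\phi(\qq^\T \qq) > 0$. Given such $\phi$, the null set $N_\phi := \{f : \phi(f^\T f) = 0\}$ is a left ideal of $\FF$ containing $J$ (because $a \in J$ implies $a^\T a \in J$ by the left-ideal property), and the form $\langle f, g \rangle_\phi := \phi(g^\T f)$ descends to an honest inner product on $H_\phi := \FF / N_\phi$. The identity $\phi(g^\T x_i f) = \phi((x_i^\T g)^\T f)$ shows that the left-multiplication operators $L_{x_i}$ and $L_{x_i^\T}$ on $H_\phi$ are mutually adjoint; setting $X_i := L_{x_i}$ thus gives $\pp_j(X) v_0 = \pp_j + N_\phi = 0$ and $\qq(X, X^\T) v_0 = \qq + N_\phi$, the latter nonzero precisely because $\phi(\qq^\T \qq) > 0$. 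To arrive in finite dimension: since $\qq$ has degree $d$ and each application of $X_i$ or $X_i^\T$ raises polynomial degree by at most one, every intermediate vector in the evaluation of $\qq(X, X^\T) v_0$ starting from $v_0$ lies in the finite-dimensional subspace $H := \FF_d / (N_\phi \cap \FF_d)$, and compressing each $X_i$ by the orthogonal projection $P$ onto $H$ leaves the relevant computations unchanged because the adjoint of the compression equals the compression of the adjoint.

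The main obstacle is producing $\phi$. This is a Hahn--Banach/Minkowski separation problem on the finite-dimensional space $\FF_{2d}$: I want a linear functional nonnegative on the sums-of-squares cone $\Sigma^2_{2d}$, zero on $J \cap \FF_{2d}$, and nonzero on $\qq^\T \qq$ (or, if $\qq^\T \qq$ itself is unsuitable, on some $h^\T \qq$ where $h \in \FF$ is chosen so that its class in $M$ pairs nontrivially with that of $\qq$). The cone $\Sigma^2_{2d}$ is closed by the Carath\'eodory argument used in the proof of Theorem~\ref{thm:free SOS}, hence so is $\Sigma^2_{2d} + (J \cap \FF_{2d})$, and the hypothesis $\qq \notin J$ is exactly what is needed to place the chosen witness outside this closed convex cone so that Minkowski separation applies. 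This verification --- modelled on the corresponding separation step in the proof of Theorem~\ref{thm:posspq} --- is the crux of the argument; once it is in hand, the construction above yields the desired finite-dimensional counterexample.
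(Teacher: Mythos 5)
The paper does not actually prove Theorem \ref{thm:NullSS}; it defers to \cite{HMP3} and characterizes the argument there as using ``well chosen, separating $*$-representations'' together with ``a rather different `dilation type' of argument.'' Measured against that, your GNS-plus-cone-separation outline is a different route, and it has a genuine gap precisely at the step you yourself call the crux. Reducing the theorem to the existence of a positive functional $\phi$ with $\phi(\pp_j^\T\pp_j)=0$ for all $j$ and $\phi(\qq^\T\qq)>0$ is fine, but the claim that ``the hypothesis $\qq\notin J$ is exactly what is needed'' for the Minkowski separation is unsupported. By the bipolar theorem, such a $\phi$ exists if and only if there is \emph{no} (approximate) identity of the form $\qq^\T\qq+\sum_i r_i^\T r_i\in \overline{(J+J^\T)\cap \FF_{2k}}$. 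Deducing the absence of such an identity from the purely algebraic statement $\qq\notin J$ is a real-Nullstellensatz-type assertion that is essentially the theorem itself in dual form; it does not follow from closedness of $\Sigma^2$ plus separation. You have translated the problem, not solved it.

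The decisive symptom is that your argument never uses the hypothesis that the $\pp_j$ do not involve the $x_j^\T$ variables --- yet without that hypothesis the theorem is false, as the paper's own Example \ref{ex:unreal} shows: for $\pp=(x^\T x+xx^\T)^2$ and $\qq=x+x^\T$ the vanishing implication holds (if $\pp(X)v=0$ then $Xv=X^\T v=0$, hence $\qq(X)v=0$), while $\qq\notin\FF\pp$ for degree reasons, since $\FF$ is a domain and every nonzero element of the left ideal $\FF\pp$ has degree at least $4$. Correspondingly, the functional you need does not exist for this pair: in any GNS representation, $\phi(\pp^\T\pp)=0$ forces $(X^\T X+XX^\T)\xi=0$, hence $X\xi=X^\T\xi=0$ and $\phi(\qq^\T\qq)=0$. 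So the separation step genuinely fails once the $\pp_j$ involve transposes, and any correct proof must exploit their transpose-freeness --- e.g., as in \cite{HMP3}, by working with the left module $\RRx/\sum\RRx\,\pp_j$ on a finite-dimensional truncation and then dilating, or choosing the inner product, so that the adjoints $X_j^\T$ do what is required. That is the content your proposal omits. (A secondary, repairable issue: after compressing to $\FF_d/(N_\phi\cap\FF_d)$, the operator $(PX_iP)^\T$ need not agree with $X_i^\T$ on the intermediate vectors arising in $\qq(X,X^\T)v_0$, since the adjoint of left multiplication does not respect the degree filtration; one must instead control the scalar $\langle \qq(X,X^\T)v_0,\qq(X,X^\T)v_0\rangle=\phi(\qq^\T\qq)$ directly, as in the proof of Theorem \ref{thm:free SOS}.)
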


Again, this proposition is stronger than its commutative
counterpart. For instance there is no need of taking higher powers
of $\qq$, or of adding a sum of squares to $\qq$.  Note that
here $\RRx$ has a different meaning than earlier, since,
unlike previously,
the variables are nonsymmetric.

We refer the reader to \cite{HMP3} for the proof of Theorem
\ref{thm:NullSS}. 
An earlier, transpose-free Nullstellensatz due to Bergman was
given in 
\cite{HM04pos}.

\medskip

Here is   a theorem which could be regarded as
a very different type of \NC  Nullstellensatz.

\begin{thm}[\protect{\cite[Theorem 2.1]{KS08}}]\label{thm:trace0}
Let $p=p^* \in \FF_d$ be a non-commutative polynomial satisfying
 $ {\rm tr}\, p(M,M^\T) = 0$ for all $g$-tuples of linear operators $M$
acting  on a Hilbert space of dimension at most $d$.
Then $ p$ is a sum of commutators of non-commutative polynomials.
\end{thm}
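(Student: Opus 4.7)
The plan is to reduce the statement to a standard characterization of sums of commutators in the free $*$-algebra in terms of cyclic equivalence classes of words, and then show that matrix tuples of dimension at most $d$ suffice to separate the cyclic classes of words of length at most $d$.

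First, I would recall the well-known algebraic fact: a polynomial $f \in \FF$ is a sum of commutators $[a,b]=ab-ba$ in $\FF$ if and only if, for every cyclic equivalence class of words (where $w_1 \sim w_2$ iff $w_1$ is a cyclic rotation of $w_2$), the sum of the coefficients of $f$ over words in that class vanishes. The ``only if'' direction follows because $w-w'$ is already a commutator whenever $w$ is a cyclic rotation of $w'$, and the ``if'' direction is immediate from $\tr[a,b]=0$ combined with linearity. Using this, write $p = c + C$, where $C$ is a sum of commutators and $c$ is a \emph{cyclically reduced representative}: for each cyclic class $[w]$ of words of length $\le d$ appearing in $p$, $c$ has exactly one representative carrying the full class coefficient $\sum_{u\in[w]} p_u$. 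Since trace is cyclically invariant, $\tr p(M,M^\T) = \tr c(M,M^\T)$ for every tuple $M$. The theorem thus reduces to showing: if $\tr c(M,M^\T)=0$ on all tuples $M$ of operators on Hilbert spaces of dimension at most $d$, then $c=0$.

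Next, this is a linear independence statement: the functions $M \mapsto \tr w(M,M^\T)$, taken over one representative $w$ per cyclic class of length $\le d$, are linearly independent as functions of tuples of operators in dimension $d$. To prove this, I would construct, for each cyclic class $[w]$ with $\mathrm{len}(w)=k\le d$, an explicit tuple $M^{[w]}$ of $k \times k$ matrices that detects $[w]$. The natural candidates are built from nilpotent shift operators: if $w=y_{i_1}y_{i_2}\cdots y_{i_k}$ where each $y_{i_j}\in\{x_1,\dots,x_g,x_1^\T,\dots,x_g^\T\}$, assign to each symbol $x_r$ (resp.\ $x_r^\T$) a matrix whose only nonzero entry is the single ``rung'' $(j+1 \bmod k,j)$ corresponding to the positions where $x_r$ (resp.\ $x_r^\T$) appears in $w$. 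Then $w(M^{[w]},(M^{[w]})^\T)$ traces to a nonzero value (a signed count of cyclic matchings), while for any cyclic class $[w']\ne [w]$ of length $\le d$ the matrix product $w'(M^{[w]},(M^{[w]})^\T)$ has no closed loop and therefore vanishes in trace. Testing $c$ against each such $M^{[w]}$ forces every class coefficient of $c$ to be zero, giving $c=0$.

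The main obstacle will be the second step: producing separating tuples of dimension exactly equal to the word length and handling the $\T$-variables cleanly, especially when a word contains both $x_r$ and $x_r^\T$ (so the same matrix variable must simultaneously produce the correct action in two different slots under transposition). A useful technical device is to take the shift matrices to live on disjoint index blocks within a single $k\times k$ space, chosen so that the transpose of a shift on one block lands on precisely the positions demanded by the $x_r^\T$-occurrences in $w$. One must check that this can always be arranged with dimension at most $k\le d$, and that the separating trace for $[w]$ against any $[w']\ne[w]$ of length $\le d$ really vanishes --- this is where the bound on degrees matches the bound on dimensions and cannot be weakened. With that combinatorial construction in hand, the linear independence follows and completes the proof.
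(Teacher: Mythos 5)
The paper states Theorem \ref{thm:trace0} with a citation to \cite[Theorem 2.1]{KS08} and gives no proof of its own, so your argument must be judged directly; it contains a genuine gap at its central step. Your first reduction is fine: a polynomial is a sum of commutators if and only if, for each cyclic equivalence class of words, the sum of its coefficients over that class vanishes. The problem is the claimed linear independence of the functions $M\mapsto \tr w(M,M^{*})$ indexed by cyclic classes of words of length at most $d$. This is false, for a reason that no choice of separating tuples can evade: for every word $w$ and every tuple $M$ one has $w(M,M^{*})^{*}=w^{*}(M,M^{*})$, where $w^{*}$ is the reversed word with stars toggled, and hence $\tr w(M,M^{*})=\tr w^{*}(M,M^{*})$ identically, even though $w$ and $w^{*}$ are in general \emph{not} cyclic rotations of one another. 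Concretely, $c=x_1x_2-x_1^{*}x_2^{*}$ (or $x^2-(x^{*})^2$ in one variable) is already ``cyclically reduced'' in your sense, with nonzero class coefficients, yet $\tr c(M,M^{*})=0$ on matrices of every size; so the reduction ``$\tr c\equiv 0\Rightarrow c=0$'' fails, and your tuple $M^{[w]}$ built from the rungs of $w$ necessarily assigns to $w^{*}$ the same nonzero trace it assigns to $w$.

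The missing ingredient is the hypothesis $p=p^{*}$, which your argument never invokes --- a warning sign, since the theorem is false without it (the polynomials above are trace-zero everywhere but are not sums of commutators; they are anti-symmetric). The repair runs as follows: symmetry of $p$ forces the class coefficients to satisfy $c_{[w]}=c_{[w^{*}]}$, whereas vanishing of the trace yields only $c_{[w]}+c_{[w^{*}]}=0$, and this only after one proves linear independence of the trace functions indexed by \emph{orbits of the involution acting on cyclic classes}; combining the two relations gives $c_{[w]}=0$ for every class. Your shift/rung construction is the right kind of tool for that orbit-level independence statement (and the block device you sketch for words containing both $x_r$ and $x_r^{*}$ is where the genuine combinatorial work lies), but as written it is aimed at a statement that is simply not true.
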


\smallskip

We end this subsection with an example which goes against any
intuition we would carry from the commutative case, see
\cite{HM04pos}.

\begin{example}
  \label{ex:unreal}
Let $\pp=(x^\T x+xx^\T)^2$ and $\qq=x+x^\T$ where $x$ is a
single variable. Then, for every matrix $X$ and vector $v$
(belonging to the space where $X$ acts), $\pp(X)v=0$ implies
$\qq(X)v=0$; however,
  there does not exist
  a positive integer $m$ and $r,r_j \in \mathbb R \langle x,
x^\T\rangle,$ so that
  \begin{equation}
   \label{eq:unrealrep}
     \qq^{2m}+\sum r_j^\T r_j =  \pp r + r^\T \pp .
  \end{equation}
Moreover,  we can modify the example to add the condition $\pp(X)$
is  positive semidefinite  implies $\qq(X)$ is  positive
semidefinite and still not obtain this representation.
\end{example}

\subsection{Tracial Positivstellensatz}

Another type of non-commutative positivity is given by the trace.
A polynomial $p\in\FF$ is called {\bf trace-positive} if
$\tr p(X,X^*)\geq0$ for all $X\in(\Rnn)^g$.
The main motivation for studying these comes from two
outstanding open problems: Connes' embedding
conjecture \cite{Con} from operator algebras \cite{KS08}
and the Bessis-Moussa-Villani (BMV) conjecture \cite{bmv} from
quantum statistical mechanics \cite{KSbmv}.

Clearly, a sum of a matrix positive (i.e., sum of hermitian squares
by Theorem \ref{thm:free SOS}) and a trace-zero (i.e., sum of commutators by
Theorem \ref{thm:trace0}) polynomial is trace-positive. However, unlike in
the matrix positive case, not every trace-positive polynomial 
is of this form  \cite{KS08,KSbmv}.

\begin{example}
Let $x$ denote a single non-symmetric variable and
\begin{equation*}
M_0:= 3 x^4 - 3 (xx^*)^2 - 4 x^5x^* - 2 x^3 (x^*)^3   +  2 x^2x^*x(x^*)^2 
+ 2 x^2 (x^*)^2xx^* + 2 (xx^*)^3.
\end{equation*}
Then the non-commutative Motzkin polynomial in non-symmetric variables
is
$$
M:=1+M_0+M_0^*\in\FF.
$$
It is trace-positive but
is not a sum of hermitian squares and commutators.
\end{example}

Life is somewhat easier in the constrained, bounded case.
For instance, in the language of operator algebras
we have:

\begin{thm}[\cite{KS08}]
For $f=f^*\in\FF$ the following are equivalent:
\begin{enumerate}[{\rm(i)}]
\item $\tr\big(f(a,a^*)\big)\ge 0$ for all 
finite von Neumann algebras $\cA$ and 
all tuples of contractions $a\in\cA^g$;
\item for every $\ep\in\R_{>0}$, $f+\ep$ is a sum of commutators and of an
element from\\ $\QM(1- x_1^*x_1,\ldots,1-x_g^*x_g)$.
\end{enumerate}
\end{thm}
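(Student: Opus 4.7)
The direction (ii)$\Rightarrow$(i) is elementary. If $(\cA,\tau)$ is a finite von Neumann algebra with faithful normal trace and $a\in\cA^g$ is a tuple of contractions, every commutator evaluates to a trace-zero element, while every element of the quadratic module $\QM(1-x_1^*x_1,\ldots,1-x_g^*x_g)$ evaluates to a positive operator because $1-a_j^*a_j\succeq 0$. Hence if $f+\ep$ lies in this sum, then $\tau(f(a,a^*))+\ep\ge 0$; letting $\ep\downarrow 0$ yields (i).

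For (i)$\Rightarrow$(ii), the plan mirrors the Hahn--Banach/GNS template used to prove Theorems~\ref{thm:free SOS}, \ref{thm:posspq}, and~\ref{thm:nicht-nsd}, adapted so as to produce a tracial state on a finite von Neumann algebra rather than merely an operator tuple with a cyclic vector. Fix $\ep>0$ and suppose, toward a contradiction, that $f+\ep\notin\mathcal{T}:=[\FF,\FF]+\QM(1-x_1^*x_1,\ldots,1-x_g^*x_g)$. Pick a degree $k$ so large that $f\in\FF_{2k}$ and let $\mathcal T_k$ denote the image in $\FF_{2k}$ of those elements of $\mathcal T$ admitting a representation by commutators and $\QM$-certificates all of degree at most $2k$. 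The first and main obstacle is to verify that $\mathcal T_k$ is closed in the Euclidean topology of the finite-dimensional space $\FF_{2k}$. Closedness of the $\QM$-part alone rests on the archimedean character of the ball constraint $\sum(1-x_j^*x_j)\succeq 0$ and a Carath\'eodory convex-hull argument, exactly as in the sketch accompanying Theorem~\ref{thm:free SOS}. The added subtlety is that the space of commutators is a linear subspace and the sum of a closed subspace with a closed cone need not be closed; I plan to circumvent this by using the archimedean bound to control the commutator summand along a convergent sequence of $\QM$-certificates, extracting a subsequence whose commutator parts also converge.

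Granting closedness of $\mathcal T_k$, a Minkowski separation yields a linear functional $L\colon\FF_{2k}\to\R$ with $L\ge 0$ on $\mathcal T_k$ and $L(f+\ep)<0$. The archimedean property then enables a Haviland-style extension of $L$ to a linear functional on all of $\FF$ with the same positivity properties. Since $\pm[\FF,\FF]\subseteq\mathcal T$, the extended $L$ vanishes on commutators and is therefore tracial: $L(pq)=L(qp)$. Since $p^*p\in\QM\subseteq\mathcal T$, it is positive semidefinite: $L(p^*p)\ge 0$. After rescaling so that $L(1)=1$, $L$ is a tracial state on $\FF$. Now apply the GNS construction: let $N=\{p\in\FF\mid L(p^*p)=0\}$, form the Hilbert space $H$ as the completion of $\FF/N$ under $\langle p,q\rangle:=L(q^*p)$, and define $\pi(x_j)$ as left-multiplication by $x_j$. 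The inequality $L(p^*(1-x_j^*x_j)p)\ge 0$, valid because $p^*(1-x_j^*x_j)p\in\QM$, shows that each $\pi(x_j)$ extends to a contraction on $H$.

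Traciality of $L$ makes $\xi:=1+N$ a tracial vector. Passing to the weak-operator closure $\cM$ of the image of $\pi$ and, if necessary, quotienting by the kernel of $\tau(\cdot):=\langle\cdot\,\xi,\xi\rangle$, one obtains a finite von Neumann algebra $\cM$ with faithful normal trace $\tau$ and a $g$-tuple of contractions $a:=\pi(x)\in\cM^g$ satisfying
\[
  \tau(f(a,a^*))=L(f)=L(f+\ep)-\ep<-\ep<0,
\]
contradicting~(i). The core technical difficulty throughout is the closedness assertion in Step~1; once that is in hand, the remainder of the argument is essentially formal and proceeds along the now-familiar free-algebra Positivstellensatz lines.
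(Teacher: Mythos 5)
Your direction (ii)$\Rightarrow$(i) is correct and is the easy half. For (i)$\Rightarrow$(ii), your overall architecture --- separate $f+\ep$ from the cone $\mathcal T=[\FF,\FF]+\QM(1-x_1^*x_1,\dots,1-x_g^*x_g)$, turn the separating functional into a tracial state, run GNS to obtain a tuple of contractions, and pass to the weak-operator closure to land in a finite von Neumann algebra --- is indeed the shape of the argument in \cite{KS08} (the survey itself gives no proof of this theorem, only the citation). But the step you yourself flag as ``the first and main obstacle'' is a genuine gap, and moreover it is the wrong obstacle to be attacking. You propose to prove that the truncated cone $\mathcal T_k$ is closed in $\FF_{2k}$, and you correctly concede that a closed subspace plus a closed cone need not be closed; your proposed repair --- ``using the archimedean bound to control the commutator summand along a convergent sequence of $\QM$-certificates'' --- is not an argument. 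If $t_n\to t$ with $t_n=c_n+\sigma_n$, $c_n$ a sum of commutators and $\sigma_n$ in the truncated quadratic module, nothing bounds the $\sigma_n$ themselves: only the sums $c_n+\sigma_n$ converge, and arbitrarily large quadratic-module elements can be offset by arbitrarily large commutator sums. No closedness of $\mathcal T_k$ is established here, and none is known to be needed.

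The idea you are missing is that the $\ep$ in statement (ii) is precisely the device that lets one dispense with closedness altogether. Because $\QM(1-x_1^*x_1,\dots,1-x_g^*x_g)$ is archimedean, $1$ is an order unit for $\mathcal T$, i.e.\ an algebraic interior point of the convex cone $\mathcal T$ inside the real vector space of symmetric elements of $\FF$. One may therefore apply the Eidelheit--Kakutani separation theorem (separation of a convex set possessing an internal point from a point outside it, valid in an arbitrary real vector space with no topology and no closedness hypothesis) directly to the full cone $\mathcal T$: if $f+\ep\notin\mathcal T$, there is a linear functional $L$ with $L\ge 0$ on $\mathcal T$, $L(1)=1$, and $L(f+\ep)\le 0$, hence $L(f)\le -\ep<0$. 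This simultaneously removes your closedness step and your Haviland-style extension step, since the separation already takes place in $\FF$ rather than in $\FF_{2k}$. From that point on your GNS and von Neumann completion argument is essentially correct, modulo the routine remarks that $L$ should first be replaced by its $*$-symmetrization (the cone is $*$-invariant, so this preserves all the inequalities) and that traciality of the vector state $\langle\,\cdot\,\xi,\xi\rangle$ passes from $\pi(\FF)$ to its weak-operator closure by density.
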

\noindent
The big open question \cite{Con,KS08} is whether (i) or (ii) is equivalent
to 
\begin{enumerate}[{\rm(i)}]
\item[\rm (iii)]
$\tr\big(f(X,X^*)\big)\ge 0$ for all $n\in\N$ and
all tuples of contractions $X\in(\Rnn)^g$.
\end{enumerate}

An attempt at better understanding trace-positivity is made
in \cite{BK}, where the duality between trace-positive polynomials
and the \emph{tracial moment problem} is exploited.
The tracial moment problem is the following question:
For which sequences $(y_w)$ indexed by words $w$
in $\x,\x^*$,
does there exist $n\in\N$, and a
positive
Borel measure $\mu$ on $(\R^{n\times n})^{g}$ 
satisfying
\begin{equation}\label{eq:gewidmetmarkus}
y_w = \int \! w( A,A^*) \, d\mu( A)\, ?
\end{equation}
Such a sequence is a \emph{tracial moment sequence}.
If one is interested only in \emph{finite} sequences $(y_w)$, then
this is the \emph{truncated} tracial moment problem.

To a sequence $y=(y_w)$ one associated the (infinite) \emph{Hankel matrix}
$M(y)$, indexed by words, by $M(y)_{u,v}=y_{u^*v}$. One of the results
in \cite{BK} shows that if $M(y)$ is positive semidefinite and of 
finite rank, then $y$ is a tracial moment sequence. In the truncated
case a condition called ``flatness'' governs the existence of a representing
measure, much like in the classical case. For details and proofs 
see \cite{BK}.

For the free non-commutative moment problem we refer the reader to
\cite{Mc:01}.

\section{Algebraic Software}
\label{sec:softw}

This section briefly surveys existing software
dealing with non-commutative convexity (Section
\ref{subsec:ncalg}) and positivity (Section \ref{subsec:ncsos}).

\subsection{NCAlgebra under Mathematica}\label{subsec:ncalg}
Here is a list of  software running under NCAlgebra \cite{HdOSM10}
(which runs under Mathematica)
that implements and experiments on symbolic algorithms
pertaining to non-commutative Convexity and LMIs.

NCAlgebra is available from
\url{http://www.math.ucsd.edu/~ncalg}

\begin{itemize}
\item
{\bf Convexity Checker.} \
\label{ConvC}
Camino, Helton, Skelton, Ye \cite{CHSY03} have an (algebraic) algorithm for
determining the region on which a rational expression is convex.
\item
{\bf Classical Production of LMIs.} \
There are two Mathematica NCAlgebra notebooks by
de Oliveira and Helton.
The first is based on
algorithms for implementing the  1997 approach
of Skelton, Iwasaki and  Grigonidas \cite{SIG97}
associating LMIs to
more than a dozen
control problems.
The second (requires C++ and NCGB)
 produces LMIs by symbolically implementing the 1997 change of
variables method of Scherer et al.
\item
{\bf Schur Complement  Representations of a non-commutative rational.} \
This computes a linear pencil whose Schur complement 
is the given nc rational function $p$ using 
Shopple - Slinglend  thesis algorithm.
It is not known if $p$ convex near $0$ always
leads to a monic pencil via this algorithm, but
we never saw a counter example.
\item
{\bf Determinantal Representations.} \
Finds Determinantal Representations of a given polynomial $p$.
Shopple - Slinglend implement
Slinglend's thesis algorithm  plus the \cite{HMV06} algorithm.
Requires NCAlgebra.
\end{itemize}

See
\url{http://www.math.ucsd.edu/~ncalg/surveydemo}
for occurences of available demos.

\subsection{NCSOStools under Matlab}\label{subsec:ncsos}
NCSOStools \cite{ckp10}
which runs under Matlab,
implements and experiments on numeric algorithms
pertaining to non-commutative positivity and sums of squares.
Here is a sample of features available.

\begin{itemize}
\item {\bf Non-commuting variables.} \
Basic symbolic computation with nc variables for Matlab
has been implemented.
\item {\bf Matrix-positivity.}\ 
An nc polynomial $p$ is matrix positive if and only if it is a sum
of squares. This can be easily tested using a variant of the classical
Gram matrix method. Indeed, $p\in\R\ax_{2d}$ is a sum of squares if
and only if 
$p=\ax_d^* G \ax_d$ for a positive semidefinite $G$. (Here, 
$\ax_d$ denotes a (column) vector of all words in $x$ of degree $\leq d$.)
This can be easily formulated as a feasibility semidefinite program (SDP).
\item {\bf Eigenvalue optimization.}\ 
Again, using SDP we can compute
the smallest eigenvalue $f^\star$ a symmetric $f\in\R\ax$ can attain.
That is,
\begin{equation*}
f^\star=\inf\big\{ \langle f(A)v,v\rangle :\ 
A \text{ a $g$-tuple of symmetric matrices, } v \text{ a unit vector}\big\}.
\end{equation*}
Hence $f^\star$ is the greatest lower bound on the eigenvalues
$f(A)$ can attain for $g$-tuples of symmetric matrices $A$, i.e.,
$(f-f^\star)(A)\succeq 0$ for all $n$-tuples of symmetric matrices $A$,
and $f^\star$ is the largest real number with this property.
Given that a polynomial is matrix positive if and only if it
is a sum of squares
we can compute $f^\star$ efficiently with SDP:
\begin{equation*}
\label{eq:mineig}
\begin{array}{rcl}
   f^{\star} & = & \sup ~~             \la \\
   \mbox{s.\,t.} & &  f-\la~\in~\Sigma^2.
\end{array}
\end{equation*}

\item {\bf Minimizer extraction.}\ 
Unlike in the commutative case, if $f^\star$ is attained, then
minimizers $(A,v)$ can always be computed.
That is, $A$ is a $g$-tuple of symmetric matrices
and $v$ is a unit eigenvector for $f(A)$ satisfying
\begin{equation*}
f^\star=\langle f(A)v,v\rangle.
\end{equation*}
Of course, in general $f$ will not be bounded from below.
Another problem is that even if $f$ is bounded, the
infimum $f^\star$ need not be attained.
The core ingredient of this minimizer extraction is
the nc moment problem governed by a condition
calles ``\emph{flatness}'',
together with the GNS construction.

\item{\bf Commutators and Cyclic equivalence.}\ 
Two polynomials are cyclically equivalent if their difference
is a sum of commutators. This is easy to check.

\item{\bf Trace-positivity.}\ 
The sufficient condition for trace-positivity (i.e., sum of squares
up to cyclic equivalence) is tested for using a variant of the
Gram matrix method applied to matrix positivity.
\end{itemize}

NCSOStools is extensively documented and available at
\url{http://ncsostools.fis.unm.si}

\linespread{1.2}

\end{document}